\newtheorem{theorem}{Theorem}
\newtheorem{proposition}{Proposition}
\theoremstyle{remark}
\title[Noise induced Stability of a Mean-Field model of Systemic Risk]{Noise induced Stability of a Mean-Field model of Systemic Risk with uncertain robustness}
\author{Alexander Alecio}
\date{\today}
\begin{document}
	
	\begin{abstract}
        We consider a model for systemic risk comprising of a system of diffusion processes, interacting through their empirical mean. Each process is subject to a confining double-well potential with some uncertainty in the coefficients, corresponding to fluctuations in height of the potential barrier seperating the two wells. This is equivalent to studying a single McKean-Vlasov SDE with explicit dependence on its moments and, novelly, independently varying additive and multiplicative noise. Such non-linear SDEs are known to possess two phases: stable (ordered) and unstable (disordered). When the potential is purely bistable, the phase changes from stable to unstable when noise intensity is increased past a critical threshold.
        
		With the recent advances in \cite{alecio}, it will be shown that the behaviour here is far richer: indeed, depending on the interpretation of the stochastic integral, the system exhibits phase changes that cannot occur in any regime where there is no uncertainty in the potential. Strikingly, this allows for the phenomenon of noise induced stability; situations where more noise can reduce the risk of system failure.


	\end{abstract}
	\keywords{Systemic risk, Interacting Particle System, McKean-Vlasov diffusions, phase transitions}
	\subjclass[2000]{60H30, 60J60, 82C26, 91B26, 91B70}
    \maketitle

    Consider an evolving system of interconnected components that can transition between two states, functioning or failed. If a sufficient number of individual components were to be in the failed state concurrently, the whole system fails; termed `systemic failure'. Each component has an intrinsic stability, a quantification of its robustness, that competes with a random perturbation that destabilises their state. Interconnectedness (or cooperation) between components, the degree of which can be varied, works to stabilise individual components, assuming a sufficient number of the rest are in a functioning state. The expected trade-off of increasing interconnectedness is an increase in `systemic risk', the probability of systemic failure; see \cite{ssra} for an overview of systemic risk analytics.

    Systemic risk is an important consideration in many fields. The archetypal example from engineering would be a system of interacting components that can cooperate by sharing load, but will sytemically fail if a sufficient number of its constituent components themselves are in the failed state. One tangible realisation are power grids, \cite{pgs}: individual substations may pass demand onto other stations to avoid individual failure, at the risk of total grid failure. Another are banks, which cooperate by lending to one another to prevent default. This linkage is a potential `contagion channel' \cite{lds} as creditor banks are left in a vulnerable position if exposed enough to a defaulting bank. This in turn may lead to further defaults, known as `financial contagion' and documented to have occured in many financial crises \cite{fca, boe, spil}.

	\section{Mean-Field Modeling of Systemic Risk}

    To capture this, a system of $n$ weakly interacting diffusions was introduced in \cite{ss}, where the equation for the risk state (or variable) of component $i$ is
	\begin{equation}\label{ptype}
	dX_t^{n,i}=(-V^{'}(X_t^{n,i})-\theta(X_t^{n,i}-\frac{1}{n}\sum\nolimits_j X_t^{n,j}))dt+\sigma dB_t^i
    \end{equation}
	Each component can either be in a functioning or failed state, corresponding to whether its risk state is positive or negative.  
    Accordingly, the potential $V$ is taken to be symmetric and with minima at $\pm 1$, with the two potential wells seperated by a local maxima at 0. With external perturbation, whose strength is controlled by noise intensity parameter $\sigma$, these minima are metastable: the risk states tend to remain in a potential well, but with a non-zero probability of exit in a finite time. The intrinsic stability, resistance of the components to changing risk state, is encoded in the potential $V$. Cooperation, the degree of which is controlled by $\theta>0$, is expressed through a simple mean reversion mechanism. Systemic failure occurs when a majority of components are themselves in the failed state. Commensurately, as noted in \cite{ss}, the natural choice of measure of systemic risk is the mean of the risk states, $\bar x = \frac{1}{n}\sum_jX^{n,j}$. 
		
	Calculating probabilties of events of $\bar{x}$ is complicated by no closed form forward equation for $\bar x$ existing outside of linear or convex potentials. However, it is known, under certain technical conditions, for instance \cite{leo, ohl}, that the empirical measure of $n$-SDE system (\ref{ptype}) converges on any finite time interval to the solution of the non-linear Fokker Planck equation 
    $$
		\frac{\partial}{\partial t}\rho=\frac{\partial}{\partial x}\Big((-V^{'}(x)-\theta(m_1-x))\rho+\frac{\sigma^2}{2}\frac{\partial\rho}{\partial x}\Big)
	$$
    which is the concomitant forward equation of the McKean-Vlasov process
    \begin{equation}
        \label{vanpr}
		dX_t=(-V^{'}(X_t)-\theta(X_t-m_1))dt+\sigma dW_t
	\end{equation}
    where $m_1=\int x\rho dx$, to which $\bar x(t)$ converges. This is an example of the `propagation of chaos', \cite{Chaintron_2022, sznit}, effectively generalising the problem into a larger space. 

    Behaviour stemming from the explicit dependence on moments in the drift in MV-SDE (\ref{vanpr}) has received much sustained attention, particularly when $V$ is taken as the simple bistable potential, $V=\frac{x^4}{4}-\frac{x^{2}}{2}$ (the Dawson-Shiino model for seminal papers \cite{dawson, shiino}). This includes convergence in different metrics, Central Limit theorem-type result for the fluctuations of $\bar x$ around $\int x\rho dx$, large deviations and (possible) phase transitions and their location, much of which has been extended to arbitrary potentials. 
    
	Idealised macroscopic systems forced from thermodynamic equilibrium eventually undergo a continuous symmetry-breaking instability. Like these instabilities, it has been shown by many authors, \cite{alecio,dawson,shiino,tug}, that MV-SDE (\ref{vanpr}) at stationarity demonstrates almost identical phenomenology to a second order phase transition: once the noise intensity $\sigma$ is pushed beyond a certain critical threshold, $\sigma_c$, the stable (ordered) phase gives way to the unstable (disordered) phase. The stable phase is characterised by three stationary measures (corresponding to the three extrema of $V$), as opposed to the unstable where only one exists. Casting $\sigma$ as the control parameter, admissible stationary solutions have the characteristic property $\mathbb{E}(X_\infty)$ (the mean of process $X_t$ at stationarity) which plays the  r\^ole of order parameter. Plotting these quantities reveals a pitchfork bifurcation. 

		\begin{figure}[t]
		\centering
		\includegraphics[width=\textwidth]{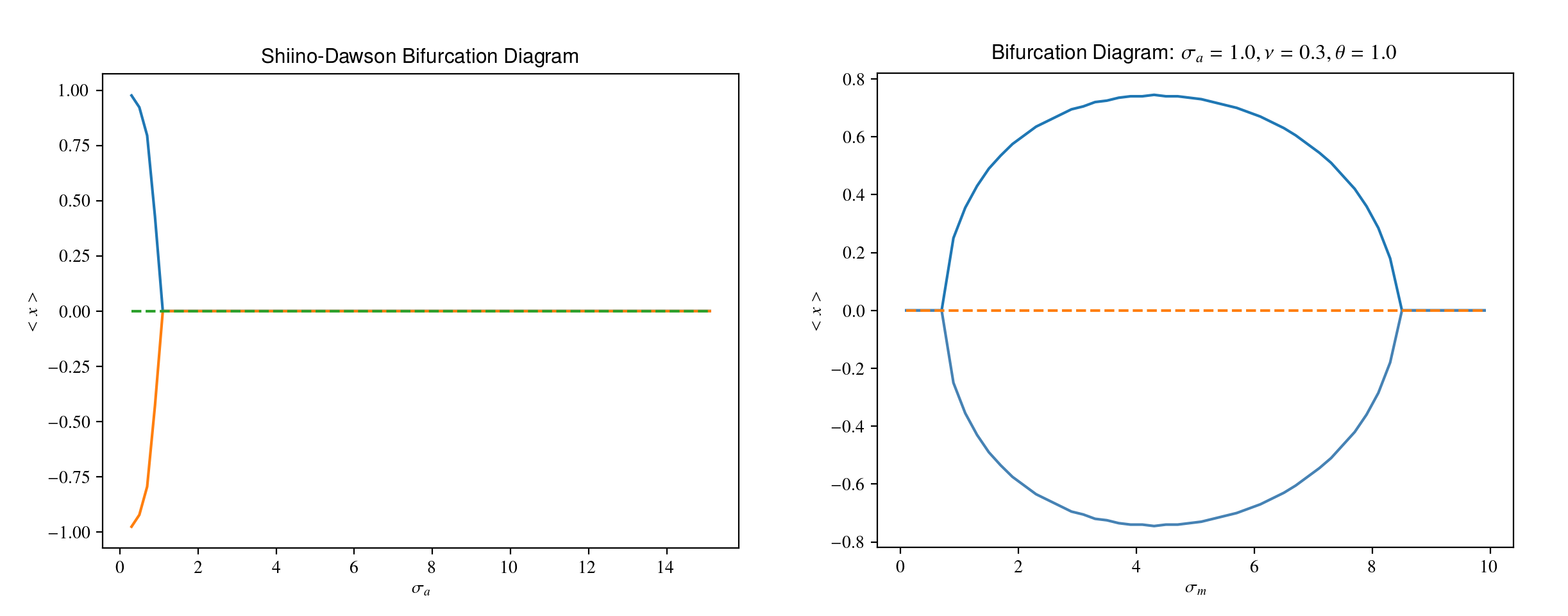}
		\caption{Bifurcation diagrams of (left) the Dawson-Shiino model with a classic pitchfork shape, and (right) the model with uncertain robustness (parameters as inscribed), introduced in section \ref{sec::model}}
		\label{fig::bif2}
	\end{figure}

	As to potential and drift choice, it was recently shown in \cite{alecio} that MV-SDE (\ref{vanpr}) has identical phase structure for a broad class symmetric bistable potentials, increasing drift and reversion-type cooperation.
    
    Heuristically, the mechanism of this change is simple. In the stable phase the cooperative terms dominate, and probability mass is concentrated, settling in a single potential well. As noise intensity is increased, the mass outside the well increases relatively and the mean approaches 0. At this point, the potential barrier is overwhelmed and the other well is equally filled, and these solution fold into the symmetric stationary measure at $\sigma_c$. (While the symmetric stationary measure exists in the stable phase of MV-SDE (\ref{vanpr}), its basin of attraction comprises only symmetric initial conditions \cite{alecio2}, so can be ignored). The potential well in which the empirical mean $\bar x$ is located is identified as the system state.

    On the other hand, $n$-SDE system (\ref{ptype}) has a unique stationary measure irrespective of parameters. While $\bar x(t)$ will remain close to $\int x\rho dx$ by the convergence result, in the stable phase there is a non-zero probability of a system state transition: $\bar x(t)$ transitioning to the other state in finite time, which decreases as $n\rightarrow\infty$. Extensive numerical testing in \cite{ss,gomes} has validated this, with $\bar x(t)$ remaining in one state for increasing duration as $n$ increases over a fixed time period. In the unstable phase, transitions between the symmetric wells/states become so common, the mean is 0.
    
    For transitions between states to be meaningful then, $\sigma$ must be fixed in order for the system to be in the stable state, as reasoned in \cite{ss}. \cite{ss} proceeds to study systems with component dependent cooperation intensities and the probability of system state transition, or systemic risk, using large deviation results of \cite{dawgart} along with various linearisations. They were able to show increased cooperation can lower the risk of an individual component failing, but with the risk of systemic failure, in accord with empirical observation, providing further corroboration this choice of this model and cooperation mechanism. 
	
	\section{The Mean-Field Model with Uncertain Stability}
	\label{sec::model}

	In this work, we consider a simple, though ultimately non-trivial, modification to the intrinsic stability. As a starting point, consider $V$ taken from parametric family of symmetric bistable potentials $\frac{x^4}{4}-a\frac{x^2}{2}$, $a>0$. As $a$ increases, so does the height of the potential barrier between states.
    
    In \cite{ss}, the system's stability is intuitively identified to be the resistance of $\bar x(t)$ to changing state. This is
    dependent on the stability of individual components at the microscopic level, equivalently their resistance to the stochastic perturbation changing their state, which is itself a function of aggregating factors such as the size of the potential barrier between risk states, $a$ and strength of cooperative terms, $\theta$, see for instance \cite{gardiner, hangi, PGDiff}

    Lifting these ideas to the macroscopic level, MV-SDE (\ref{vanpr}) is stable if it is in the stable phase, and so $\int x\rho\neq0$ and distinct system states exist. The system becomes more stable with respect to a change in parameter if the size of its stable phase (range of noise strength $\sigma$ such that MV-SDE (\ref{vanpr}) is in the stable phase) increases. This definition is first proposed in \cite[p.157]{ss}. That this definition is in accord with the microscopic was shown in \cite{alecio}, which demonstrated that the system becomes more stable as the aggregating factor of cooperation strength $\theta$ increases. The same result will be presented here for $a$.  

     Suppose now there is some uncertainty in the height of the potential barrier between the risk states, by replacing $a$ with a stochastic process driven by an independent Wiener process for each component: $a \rightarrow a + \sigma_m dB^{(2,i)}_t$. This could represent an incomplete state of knowledge of the implicit stability of individual agents, but can also be physically motivated. Returning to our original examples, the robustness of industrial components can be undermined by thermal fluctuations, which can be represented stochastically. Banks remain solvent when their liabilities are outweighed by their assets. These assets will be invested and their value dependent on market forces; downward movements can leave banks vulnerable to failure; asset price contagion \cite{bsc2,bsc}. In this case, the risk state is a measure of their liabilities and $a$ the initial value of their assets, with the diversity of fluctuations reflecting the differing assets each bank holds.


    Replacing $\sigma$ with $\sigma_a$, and substituting for $a$ in (\ref{ptype}), the associated MV-SDE is
    \begin{equation}
		\label{funda1}
		dX_t=\big(aX_t-X_t^3-\theta(X_t-m_1)\big)dt+\sigma_adB_t^{(1)}+\sigma_m X_t\circ_\nu dB_t^{(2)}
	\end{equation}
    The stochastic integral of the second Wiener process is open to multiple interpretations. This is denoted by $\circ_\nu$ with $\nu\in[0,1]$, determining where the value of the integrand is sampled in the limiting Riemann sums. It is well known that the lack of regularity of the Wiener process leads to entirely different values of the integral, for non-trivial integrand. The most commonly used stochastic integrals -  Klimontovich, Stratonovich and It\^o - correspond to $\nu=\{0,\frac{1}{2},1\}$. (The It\^o integral will also be denoted by omitting the $\circ$) Realisations of these stochastic integrals are known to occur in nature, \cite{PesceGiuseppe2013Stin}. Aside from empirical observation, factors influencing choice of $\nu$ are considered in Section \ref{sec::res}.

	In this work it will be shown that, while the straightforward competition of total noise to aggregating factors remains, increasing the total noise by increasing the multiplicative noise, representing increased uncertainity in components robustness, has a far more varied effect. It can destabilise the system, as might be expected, but can be a neutral factor or even influence an unstable system back into stability, so-called noise induced stability.

	\section{Mathematical Formulation}

    MV-SDE (\ref{funda1}) is equivalent in law to 
    \begin{equation}\label{funda}
		dX_t=\big(aX_t-X_t^3+X_t-\theta(X_t-m_1)\big)dt+\sqrt{\sigma_a^2+\sigma_m^2X^2_t} \circ_\nu dW_t
	\end{equation}
    In terms of the It\^o stochastic integral, MV-SDE (\ref{funda}) is
	\[dX_t=\big(aX_t-X_t^3+(1-\nu)\sigma_m^2 X_t-\theta(X_t-m_1)\big)dt+\sqrt{\sigma_a^2+\sigma_m^2X^2_t}dW_t\]
	As phase transitions and stability are entirely discernible from the law of the process, MV-SDE (\ref{funda}) is the fundamental object of study in this work. 
    
    The concomitant Fokker-Planck equation is
    \begin{equation}\label{fpe}
		\frac{\partial}{\partial t}\rho=\frac{\partial}{\partial x}\Big((-x+x^3-(1-\nu)\sigma_m^2x-\theta(m_1-x))\rho+\frac{1}{2}(\sigma_m^2x^2+\sigma_a^2)\frac{\partial\rho}{\partial x}\Big)
	\end{equation}
    This specific model was first introduced in \cite{Multinoise}, with $\nu=\frac{1}{2}$, and has been the subject of recent interest in \cite{agp}.

    Directly integrating (\ref{fpe}), the general form of the stationary measure is
    \begin{equation}\label{statmom1}
		\rho_{0}=\exp\big( \frac{a-\theta-\nu\sigma_m^2+\frac{\sigma_a^2}{\sigma_m^2}}{\sigma_m^2}\log(1+\frac{\sigma_m^2}{\sigma_a^2}x^2)+\frac{2\theta\mu}{\sigma_a\sigma_m}\arctan{\frac{\sigma_mx}{\sigma_a}}-\frac{x^2}{\sigma_m^2}\big)
	\end{equation}
    where $m_1=\int x\rho[m_1] dx$. These correspond to the roots of the self-consistency function
    \begin{equation}\label{sfcn}
		F(\nu, \sigma_a,\sigma_m,a,\theta)[\mu]=\int (x-\mu)\rho_{0}(\nu, \sigma_a,\sigma_m,a,\theta)[\mu]dx
	\end{equation} 
    which is a more appealing form for technical reasons \cite{alecio}. The roots of $F[\mu]$ are not necessarily unique, translating to multiple admissible stationary measures.

		\begin{figure}[b]
		\centering
		\includegraphics[width=\textwidth]{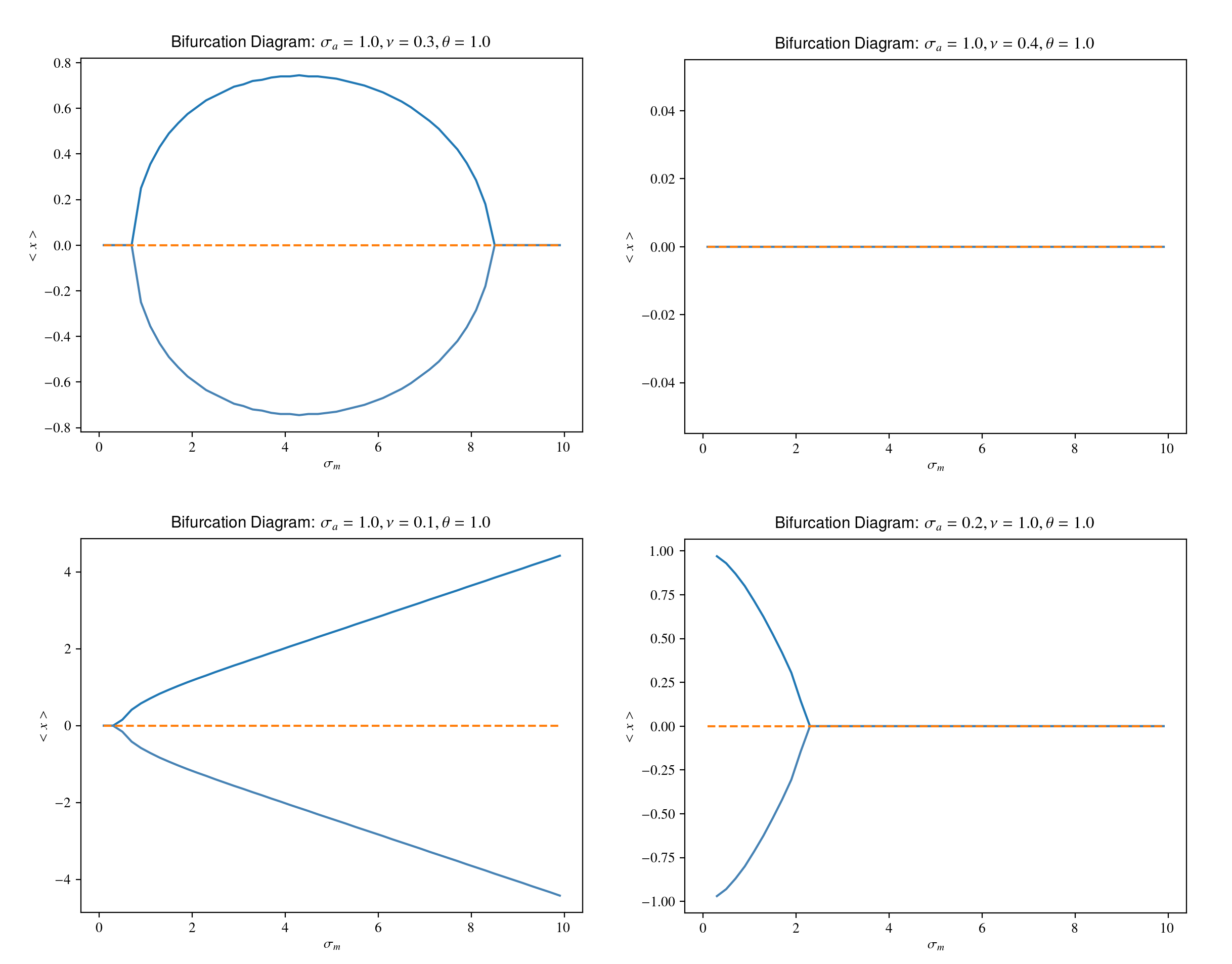}
		\caption{Panel of bifurcation diagrams, parameters inscribed. Left to right, top to bottom: $1\rightarrow3\rightarrow1$, $1\rightarrow$, $1\rightarrow3$ and $3\rightarrow 1$ for $(\nu,\,\sigma_a)$ as inscribed}
		\label{fig::bif}
	\end{figure}

	The following results, adapted from \cite{alecio}, expatiate the relationship between phase and self-consistency function $F$ of MV-SDE (\ref{funda}), sketching $F$ on rays in $(\sigma_a,\sigma_m)$-space, where the multiplicative and additive noise increase in intensity in fixed ratio $\sigma_m=k\sigma_a$, with varying $a$ and $\theta$. The interested reader can investigate their technical underpinning and precise conditions in \cite{alecio}, with any pertinent additional information relegated to Appendix \ref{sec::proofss}. 

	For MV-SDEs with elliptic drifts of the form $\sigma k(X_t)dW_t$ it was shown in \cite{alecio} there can only be 1 or 3 stationary measures, demarcating the stable and unstable phase. Its direct analogue can be concluded for MV-SDE (\ref{funda}) including, crucially, that the stability of MV-SDE (\ref{funda}) directly corresponds to the sign of $F^{'}_\mu(\nu,\sigma_a,\sigma_m,a,\theta)[0]$. 
	 
	\begin{proposition}[\cite{alecio} Proposition 3.3]
		\label{fprop}
	MV-SDE (\ref{funda}) has two phases, stable and unstable, characterised by possessing 3 (respectively 1) stationary measures. It is in the stable phase iff $F^{'}_\mu[0]>0$
	\end{proposition}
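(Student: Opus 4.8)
The plan is to obtain Proposition \ref{fprop} as the direct transcription of \cite[Proposition 3.3]{alecio}, the only real task being to verify that MV-SDE (\ref{funda}) belongs to the class of symmetric, uniformly elliptic McKean--Vlasov equations treated there. Writing the equation in It\^o form, the diffusion coefficient is $k(x)=\sqrt{\sigma_a^2+\sigma_m^2x^2}$, which is smooth, even in $x$, and bounded below by $\sigma_a>0$, hence uniformly elliptic; the drift is $b(x;m_1)=ax-x^3+(1-\nu)\sigma_m^2x-\theta(x-m_1)$, an odd polynomial in $(x,m_1)$ with negative cubic leading term in $x$ and linear mean reversion in $m_1$. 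The pair $(b,k)$ thus carries the $\mathbb{Z}_2$ symmetry $b(-x;-m_1)=-b(x;m_1)$, $k(-x)=k(x)$, and the confinement hypothesis of \cite{alecio} holds because the $-x^3$ term dominates the at-most-quadratic growth of $k^2$ together with the linear correction $(1-\nu)\sigma_m^2x$.

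First I would record the consequences of this symmetry for the self-consistency function (\ref{sfcn}). In (\ref{statmom1}) the logarithmic term and the $-x^2/\sigma_m^2$ term are even in $x$, while the $\arctan$ term is odd in $x$ and linear in $\mu$, so $\rho_0[-\mu](x)=\rho_0[\mu](-x)$. Consequently $\mu\mapsto\int x\,\rho_0[\mu]\,dx$ is odd, hence so is $F$, and in particular $F[0]=0$: the symmetric measure $\rho_0[0]$ — always normalisable, its tail being Gaussian-type through the factor $e^{-x^2/\sigma_m^2}$ irrespective of the sign of the logarithmic exponent — is a stationary measure for every admissible choice of parameters. The roots of $F$ therefore occur as $\{0\}$ together with symmetric pairs $\{\pm\mu^*\}$.

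Next I would invoke the two analytic ingredients supplied by \cite{alecio} for this class. One is tightness together with the asymptotics $F[\mu]\to\mp\infty$ as $\mu\to\pm\infty$, which follow from the confining drift and ensure the stationary measures are genuine probability measures with finite mean. The other, which is the crux, is the monotonicity/concavity estimate on the susceptibility $\mu\mapsto\int x\,\rho_0[\mu]\,dx$, yielding that $F$ has at most one zero on $(0,\infty)$. Combining these: if $F'_\mu[0]>0$ then $F$ is positive immediately to the right of $0$ and negative near $+\infty$, so it has exactly one positive root $\mu^*$, and by oddness precisely the three roots $\{-\mu^*,0,\mu^*\}$, i.e.\ three stationary measures; if $F'_\mu[0]\le 0$ then the at-most-one-positive-root property, oddness, and the asymptotics force $0$ to be the unique root, i.e.\ a single stationary measure. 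This establishes the stated dichotomy with the stable phase being exactly $\{F'_\mu[0]>0\}$; following \cite{alecio} one additionally checks that in the three-measure regime the two non-symmetric measures are the dynamically stable ones while $\rho_0[0]$ is not, which is what makes ``stable/unstable phase'' the appropriate terminology.

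I expect the only genuine obstacle to be the verification that MV-SDE (\ref{funda}) actually meets the hypotheses of \cite[Proposition 3.3]{alecio}: the diffusion coefficient here grows like $|x|$, rather than being bounded or sublinear, so one must confirm that the cubic drift $-x^3$ still dominates uniformly — both to retain confinement and tightness and, more delicately, to keep the susceptibility estimates valid uniformly in $\mu$ — and that the linear noise-induced shift $(1-\nu)\sigma_m^2x$ does not spoil the bistable structure of the effective drift. These checks, routine but not entirely immediate, are carried out in Appendix \ref{sec::proofss}.
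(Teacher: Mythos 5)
Your proposal takes essentially the same route as the paper, which offers no proof of this proposition at all: it simply cites \cite[Proposition 3.3]{alecio} and asserts that "its direct analogue can be concluded" for MV-SDE (\ref{funda}). Your reconstruction of the underlying mechanism (oddness of $F$ forcing $F[0]=0$ and symmetric root pairs, confinement giving the sign of $F$ at infinity, and the at-most-one-positive-root estimate yielding the $1$-or-$3$ dichotomy governed by the sign of $F'_\mu[0]$) is consistent with the cited framework, and your flagged concern --- that the linearly growing diffusion coefficient $\sqrt{\sigma_a^2+\sigma_m^2x^2}$ must still be shown to fall within the hypotheses of \cite{alecio} --- is a legitimate gap that the paper itself glosses over rather than addresses.
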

	
	The next shows the aggregating factors work, as for MV-SDE (\ref{vanpr}), to make MV-SDE (\ref{funda}) more stable:
	\begin{proposition}[\cite{alecio}] 
	\label{stabres} If $\sigma_m=k\sigma_a$, MV-SDE (\ref{funda}) is more stable as $a$ or $\theta$ increases.
	\end{proposition}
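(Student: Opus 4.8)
The plan is to reduce both claims, via Proposition \ref{fprop}, to monotonicity of the sign of $F'_\mu[0]$ along a ray $\sigma_m=k\sigma_a$; writing $s:=\sigma_a$, the stable phase is the open set $\mathcal S=\{\,s>0:F'_\mu[0]>0\,\}$, so ``more stable'' means $|\mathcal S|$ increases. The first step is to make $F'_\mu[0]$ explicit. The exponent of $\rho_0[\mu]$ in (\ref{statmom1}) is affine in $\mu$, its only $\mu$-dependent term being $\tfrac{2\theta\mu}{\sigma_a\sigma_m}\arctan\tfrac{\sigma_m x}{\sigma_a}$; differentiating (\ref{sfcn}) at $\mu=0$ and using that $\rho_0[0]$ is even gives
\begin{equation*}
F'_\mu[0]=\frac{2\theta}{\sigma_a\sigma_m}\,\mathbb{E}\!\left[X\arctan\tfrac{\sigma_m X}{\sigma_a}\right]-1,
\end{equation*}
the expectation being under the symmetric law $\rho_0[0](x)\propto\big(1+\tfrac{\sigma_m^2}{\sigma_a^2}x^2\big)^{p}e^{-x^2/\sigma_m^2}$, where $p$ is affine in $(a,\theta)$ with $\partial_a p=\sigma_m^{-2}>0$ and $\partial_\theta p=-\sigma_m^{-2}<0$; in particular $a$ and $\theta$ enter $\rho_0[0]$ only through $p$. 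Put $g(x):=x\arctan\tfrac{\sigma_m x}{\sigma_a}$ and $\phi(x):=\sigma_m^{-2}\log\big(1+\tfrac{\sigma_m^2}{\sigma_a^2}x^2\big)$; both are even, nonnegative and strictly increasing in $|x|$.

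Monotonicity in $a$ is then immediate. Differentiating under the integral sign, $\partial_a\log\rho_0[0]=\phi-\mathbb{E}[\phi]$, so $\partial_a\mathbb{E}[g]=\mathrm{Cov}(g,\phi)$ and $\partial_a F'_\mu[0]=\tfrac{2\theta}{\sigma_a\sigma_m}\mathrm{Cov}(g,\phi)$. As $g$ and $\phi$ are both non-decreasing functions of the single random variable $|X|$, Chebyshev's correlation inequality gives $\mathrm{Cov}(g,\phi)\ge0$, hence $\partial_a F'_\mu[0]\ge0$ \emph{pointwise} in $s$. Consequently $\mathcal S$ only grows as a set when $a$ increases, so a fortiori $|\mathcal S|$ increases.

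The dependence on $\theta$ is the real difficulty, since $\theta$ sits both in the prefactor $\tfrac{2\theta}{\sigma_a\sigma_m}$ (stabilising) and, through $p$, inside $\rho_0[0]$, where a larger $\theta$ down-weights large $|X|$ and shrinks $\mathbb{E}[g]$ (destabilising): because $a,\theta$ enter $\rho_0[0]$ only via $p$ we get $\partial_\theta\mathbb{E}[g]=-\mathrm{Cov}(g,\phi)$, whence
\begin{equation*}
\partial_\theta F'_\mu[0]=\frac{2}{\sigma_a\sigma_m}\big(\mathbb{E}[g]-\theta\,\mathrm{Cov}(g,\phi)\big),
\end{equation*}
which need not be sign-definite. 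The point is that $|\mathcal S|$ is governed only by the sign changes of $F'_\mu[0]$, i.e.\ by $\partial\mathcal S$, and along a ray $\mathcal S$ is a single interval by the structure established in \cite{alecio} (see Figure \ref{fig::bif}). At a point of $\partial\mathcal S$ one has $F'_\mu[0]=0$, i.e.\ $\tfrac{2\theta}{\sigma_a\sigma_m}=\mathbb{E}[g]^{-1}$, and substituting this reduces $\partial_\theta F'_\mu[0]\ge0$ to the covariance estimate $2\,\mathbb{E}[g]^2\ge\sigma_a\sigma_m\,\mathrm{Cov}(g,\phi)$ for $\rho_0[0]$.

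To obtain this I would use the integration-by-parts identity $\phi=\tfrac{2}{\sigma_a\sigma_m}\big(g-\Psi\big)$ with $\Psi(x):=\int_0^{x}\arctan\tfrac{\sigma_m t}{\sigma_a}\,dt$, which is even, nonnegative and increasing in $|x|$, and for which $\Psi-\tfrac12 g$ is \emph{also} non-decreasing in $|x|$ (equivalent to the elementary bound $\arctan u\ge u/(1+u^2)$). Then $\mathrm{Cov}(g,\phi)=\tfrac{2}{\sigma_a\sigma_m}\big(\mathrm{Var}(g)-\mathrm{Cov}(g,\Psi)\big)\le\tfrac{1}{\sigma_a\sigma_m}\mathrm{Var}(g)$ by one further correlation inequality applied to $g$ and $\Psi-\tfrac12 g$, so the required estimate follows from the coefficient-of-variation bound $2\,\mathbb{E}[g]^2\ge\mathrm{Var}(g)$, which I would close using the explicit sub-Gaussian form of $\rho_0[0]$, exactly as the cooperation coefficient is handled for (\ref{vanpr}) in \cite{alecio}. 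Given this boundary estimate, an implicit-function argument at the two (transversal) endpoints of $\mathcal S$ shows that each moves outward as $\theta$ grows, so $|\mathcal S|$ increases. The main obstacle is precisely that coefficient-of-variation bound: for $a$ the result is a one-line correlation inequality valid everywhere, whereas for $\theta$ the opposing prefactor and exponent effects mean the key inequality is only available on the critical curve and must be read off from the stationary law itself.
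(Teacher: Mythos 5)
Your overall strategy is the same as the paper's: the paper's proof is precisely ``at a root of $F^{'}_\mu[0]$ the derivative with respect to $\theta$ or $a$ is positive, hence the interval(s) on which $F^{'}_\mu(\sigma_a,k\sigma_a)[0]>0$ grow'', and you reproduce that reduction via Proposition \ref{fprop}. Your treatment of $a$ is complete and in fact stronger than what is needed: since $a$ enters $\rho_0[0]$ only through the exponent $p$ of the $\log(1+\sigma_m^2x^2/\sigma_a^2)$ factor, $\partial_a F^{'}_\mu[0]=\frac{2\theta}{\sigma_a\sigma_m}\mathrm{Cov}(g,\phi)\ge 0$ pointwise by the correlation inequality for two increasing functions of $|X|$; this is a clean, self-contained version of the step the paper only cites from \cite{alecio}. (Incidentally, you do not need $\mathcal{S}$ to be a single interval --- the paper itself hedges with ``interval(s)'' --- because positivity of the parameter-derivative on all of $\partial\mathcal{S}$ already forces the set to grow.)

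The $\theta$ half has a genuine gap. You correctly isolate the competition between the prefactor and the exponent, restrict to the critical set, and your integration-by-parts identity $\phi=\frac{2}{\sigma_a\sigma_m}(g-\Psi)$ and the monotonicity of $\Psi-\frac12 g$ both check out; but the whole argument then rests on the coefficient-of-variation bound $2\,\mathbb{E}[g]^2\ge\mathrm{Var}(g)$, i.e.\ $\mathbb{E}[g^2]\le 3\,\mathbb{E}[g]^2$, which you assert can be ``read off'' from $\rho_0[0]$ but do not prove --- and it is not true for this family of measures in general. In the small-$k$ limit $g\sim \tfrac{\sigma_m}{\sigma_a}x^2$ and the bound becomes the statement that $\rho_0[0]$ has excess kurtosis at most zero; worse, along a ray with $\sigma_m$ large the exponent $p\to-\nu$ and $\rho_0[0]$ behaves like $|x|^{-2\nu}e^{-x^2/\sigma_m^2}$ on its bulk, where $g\sim\tfrac{\pi}{2}|x|$ and $\mathbb{E}[g^2]/\mathbb{E}[g]^2\to\Gamma(\tfrac32-\nu)\Gamma(\tfrac12-\nu)/\Gamma(1-\nu)^2$, which exceeds $3$ already for $\nu\approx 0.4$ and diverges as $\nu\uparrow\tfrac12$. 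So your two correlation inequalities have discarded too much: the sufficient condition you are left with can fail in parameter regimes the proposition covers, and you would have to work with the sharper boundary identity $\mathbb{E}[g]^2\ge \mathrm{Var}(g)-\mathrm{Cov}(g,\Psi)$ (or follow the paper in importing the root-derivative estimate of \cite{alecio}, Proposition 3.5, adapted to the multiplicative-noise measure) rather than a universal kurtosis bound. The $a$ case is done; the $\theta$ case is reduced to an inequality that still carries essentially all of the difficulty and is, as stated, in doubt.
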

	\begin{proof}
		Appendix \ref{sec::proofss}
	\end{proof}

	The last concerns the phase structure, with the It\^o integral. The rigidity of the phase structure, stable to unstable (or $3\rightarrow 1$ in shorthand), is a characteristic feature of MV-SDE (\ref{vanpr}) with symmetric potential and diffusion.

	\begin{proposition}[\cite{alecio} Proposition 3.5]
	\label{starshaped}
	If $\sigma_m=k\sigma_a$ and $\nu=1$, MV-SDE (\ref{funda}) will transition from the stable to unstable phase $(3\rightarrow1)$ as $\sigma_a$ increases.
	\end{proposition}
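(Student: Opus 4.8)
\emph{Proof sketch (plan).}
By Proposition~\ref{fprop} the phase of MV-SDE~(\ref{funda}) is governed by the sign of $F'_\mu[0]$, so the statement reduces to: along the ray $\sigma_m=k\sigma_a$ with $\nu=1$, the map $\sigma_a\mapsto F'_\mu(1,\sigma_a,k\sigma_a,a,\theta)[0]$ is positive for small $\sigma_a$, negative for large $\sigma_a$, and changes sign exactly once. First I would make $F'_\mu[0]$ explicit. In~(\ref{statmom1}) the $\mu$-dependence is carried solely by the term $\tfrac{2\theta\mu}{\sigma_a\sigma_m}\arctan(\sigma_m x/\sigma_a)$, which is linear in $\mu$ and odd in $x$, while $\rho_0[0]$ is even with mean $0$; differentiating the ratio in~(\ref{sfcn}) at $\mu=0$ therefore annihilates the lower-order contributions and shows that the sign of $F'_\mu[0]$ equals that of
\[
\frac{2\theta}{\sigma_a\sigma_m}\,\mathbb{E}_{\rho_0[0]}\!\big[X\arctan(\sigma_m X/\sigma_a)\big]-1 .
\]
Setting $\sigma_m=k\sigma_a$ and rescaling $Y=kX$ turns this into $G(\lambda):=2\theta k^2\lambda\,\mathbb{E}_{\tilde\rho_\lambda}[Y\arctan Y]-1$, where $\lambda:=(k^4\sigma_a^2)^{-1}$ and $\tilde\rho_\lambda(y)\propto (1+y^2)^{-1}e^{-\lambda V_*(y)}$ with the \emph{symmetric} potential $V_*(y)=y^2-C_1\log(1+y^2)$, $C_1:=k^2(a-\theta)+1$; equivalently $\tilde\rho_\lambda\propto e^{-W_\lambda}$ with $W_\lambda(y)=\lambda y^2-(\lambda C_1-1)\log(1+y^2)$.

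Next I would settle the endpoints. As $\sigma_a\downarrow0$ (i.e.\ $\lambda\uparrow\infty$) the measure $\tilde\rho_\lambda$ concentrates at the minimiser(s) of $W_\lambda$ — the two symmetric wells of $V_*$ when $a>\theta$, the origin otherwise — and a Laplace estimate gives $\lambda\,\mathbb{E}_{\tilde\rho_\lambda}[Y\arctan Y]\to+\infty$ (indeed $G(0^+)=a/(\theta-a)>0$ when $a<\theta$ and $G(0^+)=+\infty$ when $a\ge\theta$), so $G>0$ for small $\sigma_a$ and the system begins in the stable phase. As $\sigma_a\to\infty$ (i.e.\ $\lambda\downarrow0$), $\tilde\rho_\lambda$ converges to the Cauchy-type density $\propto(1+y^2)^{-1}$; since $y\arctan(y)(1+y^2)^{-1}\sim\tfrac{\pi}{2}|y|^{-1}$ carries only logarithmic mass at infinity, $\mathbb{E}_{\tilde\rho_\lambda}[Y\arctan Y]=O(\log(1/\lambda))$, so $\lambda\,\mathbb{E}_{\tilde\rho_\lambda}[Y\arctan Y]\to0$ and $G\to-1<0$. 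At least one sign change therefore occurs.

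The crux is uniqueness of the crossing. With $\nu=1$ the It\^o form of MV-SDE~(\ref{funda}) has an odd drift and an even elliptic diffusion $\sqrt{\sigma_a^2+\sigma_m^2X^2}$, and along the ray the rescaled coefficient $g^2(x)/\sigma_a^2=1+k^2x^2$ does not depend on $\sigma_a$; so $\sigma_a$ plays the role of the single scalar noise parameter and $V_*$ of the symmetric bistable effective potential, and this proposition is \cite{alecio} Proposition~3.5 transcribed to the pair $(V_*,g)$. The plan is to invoke that result after checking its hypotheses for this concrete pair (that $W_\lambda$ is a symmetric, possibly degenerate, double well of the required growth, and that $\tilde\rho_\lambda$ and its $\sigma_a$-derivative are well defined). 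The self-contained route is to show $\lambda\mapsto\lambda\,\mathbb{E}_{\tilde\rho_\lambda}[Y\arctan Y]$ is strictly increasing — equivalently $G'(\sigma_a)<0$ at every zero of $G$ — by using $\tfrac{d}{d\lambda}\mathbb{E}_{\tilde\rho_\lambda}[\,\cdot\,]=-\mathrm{Cov}_{\tilde\rho_\lambda}(\,\cdot\,,V_*)$ to write
\[
\frac{d}{d\lambda}\Big(\lambda\,\mathbb{E}_{\tilde\rho_\lambda}[Y\arctan Y]\Big)=\mathbb{E}_{\tilde\rho_\lambda}[Y\arctan Y]-\lambda\,\mathrm{Cov}_{\tilde\rho_\lambda}\big(Y\arctan Y,\,V_*\big),
\]
and bounding the covariance term, a Stein/integration-by-parts identity for $\tilde\rho_\lambda$ (e.g.\ $\mathbb{E}_{\tilde\rho_\lambda}[(1+Y^2)^{-1}]=\mathbb{E}_{\tilde\rho_\lambda}[\arctan(Y)W_\lambda'(Y)]$) being the natural tool. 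Monotonicity together with the endpoint values then forces a unique $\sigma_c$ with $G(\sigma_c)=0$, stable phase for $\sigma_a<\sigma_c$ and unstable for $\sigma_a>\sigma_c$ — the asserted $3\to1$ transition. I expect the principal obstacle to be exactly this correlation estimate: $Y\arctan Y$ and $V_*$ are both even and, for $C_1>1$, non-monotone on the half-line, so it is not a one-line rearrangement/FKG argument, and this is the substantive content that Proposition~3.5 of \cite{alecio} supplies.
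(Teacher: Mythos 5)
The paper offers no proof of this proposition at all: it is imported wholesale from \cite{alecio} (Proposition 3.5), with only the remark that the "direct analogue can be concluded" for MV-SDE (\ref{funda}) because along the ray $\sigma_m=k\sigma_a$ the diffusion is $\sigma_a\sqrt{1+k^2x^2}$, i.e.\ of the elliptic form $\sigma g(X_t)\,dW_t$ treated there. Your proposal does substantially more than the paper: the reduction of $F'_\mu[0]$ to $\tfrac{2\theta}{\sigma_a\sigma_m}\mathbb{E}_{\rho_0[0]}[X\arctan(\sigma_mX/\sigma_a)]-1$ is correct (differentiate $F[\mu]=\mathbb{E}_{\rho_0[\mu]}[X]-\mu$ and use that only the $\arctan$ term carries $\mu$, with the covariance collapsing by symmetry at $\mu=0$); the rescaling to $\tilde\rho_\lambda\propto(1+y^2)^{-1}e^{-\lambda V_*}$ with $V_*(y)=y^2-C_1\log(1+y^2)$ correctly exhibits the structure (fixed even $g$, symmetric effective potential, single scalar noise parameter) under which \cite{alecio} Prop.~3.5 applies; and both endpoint asymptotics check out, including $G(0^+)=a/(\theta-a)$ in the single-well case and the logarithmic tail giving $G\to-1$ as $\sigma_a\to\infty$. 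This is exactly the verification the paper elides. The one unproven step — that $F'_\mu[0]$ crosses zero only once along the ray, equivalently that $\partial_{\sigma_a}F'_\mu[0]<0$ at every zero — you correctly identify as the substantive content of the cited proposition and do not reprove; your covariance/Stein sketch for a self-contained argument is plausible but not carried out. Since the paper itself supplies nothing here either (its proof of the companion Proposition~\ref{stabres} in Appendix~\ref{sec::proofss} relies on the same sign-of-derivative-at-a-root mechanism, also by reference to \cite{alecio}), your treatment is at least as complete as the paper's, and the honest flagging of where the imported result is load-bearing is appropriate.
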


	\begin{figure}[b]
		\centering
		\includegraphics[width=.9\textwidth]{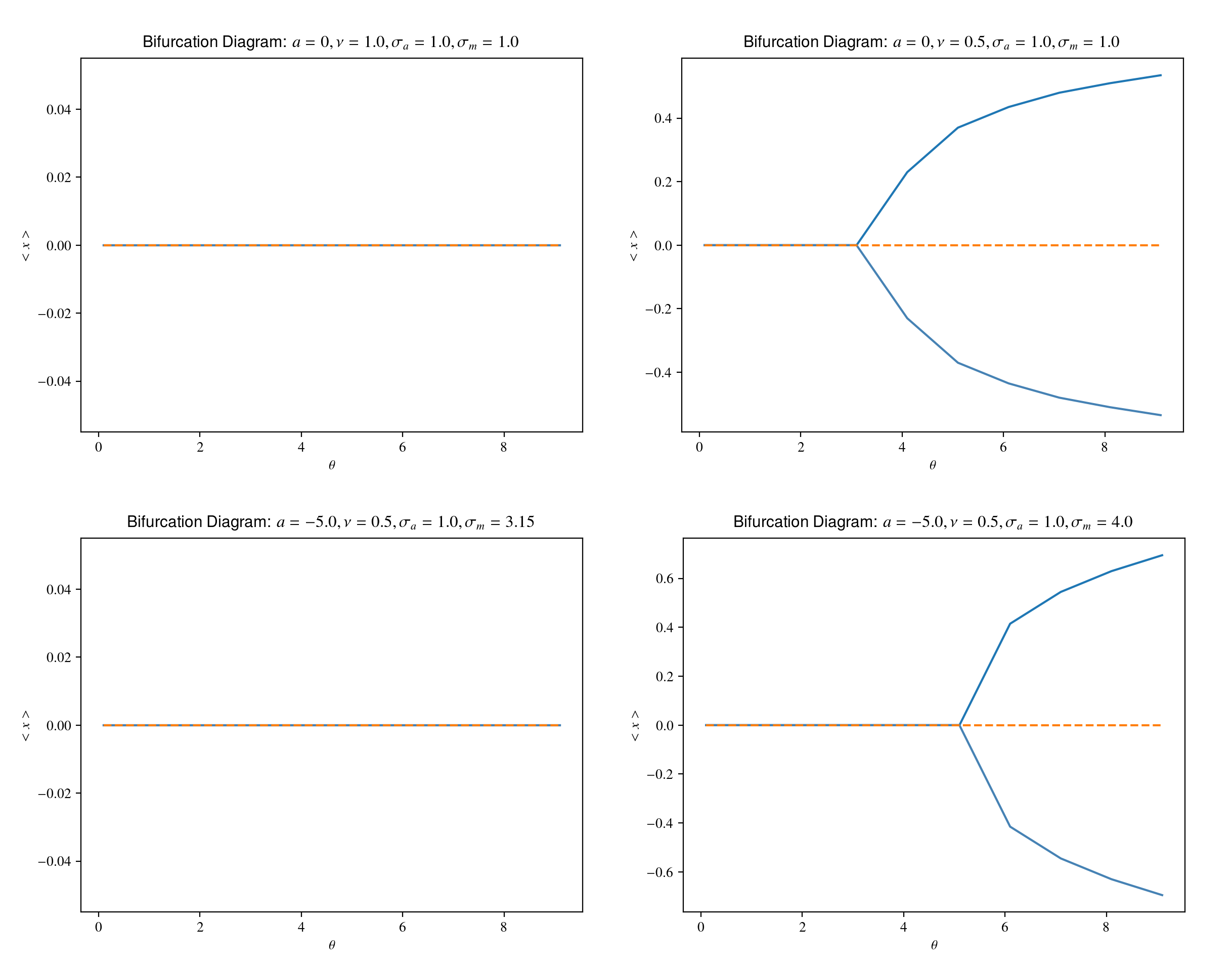}
		\caption{Bifurcation Diagrams for $a\leq0$. \textit{Top} At $a=0$ a stable phase exists so long as $\nu=1$. \textit{Bottom} Stable phase, and lack thereof, above and below the threshold. Note $\sqrt{10}\approxeq3.16$}
		\label{fig::bfp2}
	\end{figure}
	
	It is both the noise interpretation and uncertainty in components' robustness (equivalently, the ability to vary both multiplicative and additive noise out of ratio) that diversifies this phase structure, underpinning the results of this work. As an example, for some range of $\nu$, multiplicative noise can make the system more stable, as the next section will show.

	\section{Noise Induced Stability}
	\label{sec::res}
	This section will the effect of multiplicative noise on phase structure of MV-SDE (\ref{funda}). Particularly, it will be shown that, depending on $\nu$, increased multiplicative noise can actually transition the system to the stable phase and its presence can even permit the existence of a stable phase where none can exist without.

	A clear example of the latter, and one the results of \cite{alecio} are particularly well disposed to study, is when $\theta$ is varied for fixed $(\sigma_a,\sigma_m)$.
	If the potential is bistable, the phase structure is not altered by multiplicative noise: by Proposition \ref{stabres}, MV-SDE (\ref{funda}) will be stable for sufficiently large $\theta$, regardless of $(\sigma_a,\sigma_m)$. 
	Contrastingly, if the potential were convex, and $\nu=1$, there is only one phase. It will be demonstrated that multiplicative noise can induce a stable phase, which could not otherwise exist.

	Concretely, consider MV-SDE (\ref{funda}) again with the potential $V^{'}=x^3-ax$, where now $a\in\mathbb{R}$. If $a>0$, the potential is bistable and Proposition \ref{stabres} applies. When $a\leq0$, the three extrema merge, forming one minima at 0. 
	It is known that the number of stationary measures is dependent on the number of extrema. Indeed, it is straightforward to retool the results of the second section of \cite{alecio} to achieve
	\begin{theorem}[\cite{alecio} Theorem 2.12]
		\label{magnatheorem}
		Suppose $\nu=1$, and $V^{'}$ has $N$ roots, all simple. Then there exists $\theta_c$ such that for $\theta>\theta_c$ MV-SDE (\ref{funda}) has $N$ stationary measures.
	\end{theorem}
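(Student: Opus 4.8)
The plan is to reduce the statement to a root-counting problem for the self-consistency function $F$ of (\ref{sfcn}) and to analyse $F$ as $\theta\to\infty$. Fix $\nu=1$, so the It\^o drift of (\ref{funda}) at candidate mean $\mu$ is $-V'(x)-\theta(x-\mu)$ and the stationary density $\rho_0[\mu]$ of (\ref{statmom1}) is the unique normalizable solution of the zero-flux identity $\tfrac12\partial_x\big((\sigma_a^2+\sigma_m^2x^2)\rho_0[\mu]\big)=-\big(V'(x)+\theta(x-\mu)\big)\rho_0[\mu]$; it is a genuine probability density for every $\mu\in\mathbb{R}$ because the exponent in (\ref{statmom1}) is dominated at infinity by $-x^2/\sigma_m^2$ (more generally by the $-\mathrm{const}\cdot x^{\deg V-2}$ term coming from $V'$). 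Stationary measures of (\ref{funda}) correspond bijectively to roots of $F$: a root $\mu$ yields the stationary density $\rho_0[\mu]$, whose mean is $\mu$, so distinct roots give distinct measures, and every stationary measure arises this way. It therefore suffices to show $F$ has exactly $N$ roots once $\theta$ is large.

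The engine is an exact identity. Integrating the zero-flux equation over $\mathbb{R}$ --- the boundary terms vanish by the Gaussian-type tail of $\rho_0[\mu]$ --- annihilates the left-hand side and leaves $\mathbb{E}_{\rho_0[\mu]}\big[V'(X)+\theta(X-\mu)\big]=0$, i.e.
\[ F(\nu,\sigma_a,\sigma_m,a,\theta)[\mu]=\int(x-\mu)\,\rho_0[\mu]\,dx=-\frac1\theta\,\mathbb{E}_{\rho_0[\mu]}\big[V'(X)\big]. \]
This is exactly the identity the constant-noise argument of Section 2 of \cite{alecio} turns on, now with $\sigma_a^2+\sigma_m^2x^2$ in place of $\sigma^2$, and it reduces everything to $\mu\mapsto\mathbb{E}_{\rho_0[\mu]}[V'(X)]$. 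As $\theta\to\infty$ the term $\tfrac\theta2(x-\mu)^2$ in the exponent of $\rho_0[\mu]$ dominates, so $\rho_0[\mu]\rightharpoonup\delta_\mu$; with the explicit tail bound from (\ref{statmom1}), uniform for $\mu$ in a compact set, this gives
\[ \theta\,F[\mu]=-\mathbb{E}_{\rho_0[\mu]}\big[V'(X)\big]\ \longrightarrow\ -V'(\mu)=:G(\mu) \]
uniformly on compact sets, and differentiating the identity in $\mu$ --- legitimate since $\rho_0[\mu]$ depends on $\mu$ only through the bounded term $\tfrac{2\theta\mu}{\sigma_a\sigma_m}\arctan\tfrac{\sigma_m x}{\sigma_a}$, whence $F'_\mu[\mu]=-\tfrac{2}{\sigma_a\sigma_m}\,\mathrm{Cov}_{\rho_0[\mu]}\!\big(V'(X),\,\arctan\tfrac{\sigma_m X}{\sigma_a}\big)$ --- gives $\theta\,F'_\mu[\mu]\to G'(\mu)=-V''(\mu)$, again uniformly on compacts.

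It then remains to count. By hypothesis $G=-V'$ has exactly $N$ zeros $\mu_1<\dots<\mu_N$, all simple, so $G'=-V''$ is nonzero at each. (i) On a small fixed interval about each $\mu_i$, $\theta F$ converges uniformly to a function with a strict sign change and nonvanishing derivative, so for $\theta$ large $F$ has a simple zero in each of these $N$ disjoint intervals; hence $F$ has at least $N$ zeros. (ii) On a fixed compact $K$ containing all $\mu_i$ in its interior, $|G|\ge c>0$ off the union of those intervals, so $\theta F$ has no zero there for $\theta$ large. (iii) Outside $K$, since $V$ is confining, $V'$ is bounded away from $0$ with a definite sign on $\mathbb{R}\setminus K$, and a concentration estimate for $\rho_0[\mu]$ uniform in $\mu$ (again read off (\ref{statmom1}): the mode of $\rho_0[\mu]$ lies within $O(1/\theta)$ of $\mu$, with comparably small spread) forces $\mathrm{sgn}\,\mathbb{E}_{\rho_0[\mu]}[V'(X)]=\mathrm{sgn}\,V'(\mu)\neq0$ for $|\mu|$ large and $\theta\ge\theta_0$; so $F$ has no zeros outside $K$. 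Picking $\theta_c$ large enough for (i)--(iii) at once, $F$ has exactly $N$ zeros for $\theta>\theta_c$, hence (\ref{funda}) has exactly $N$ stationary measures.

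The nontrivial content sits in the two uniformity claims: uniform-on-compacts convergence of $\theta F$ (and its $\mu$-derivative) to $-V'$, and the uniform-in-$\mu$ confinement of all zeros of $F$ to a fixed compact. Both are Laplace-type asymptotics for the Gibbs density $\rho_0[\mu]$ carrying the multiplicative weight $(\sigma_a^2+\sigma_m^2x^2)^{-1}$ and the harmless $\arctan$ correction; their constant-noise versions are precisely Section 2 of \cite{alecio}, and the ``retooling'' is to check that the concentration and tail bounds survive replacing $\sigma^2$ by $\sigma_a^2+\sigma_m^2x^2$ --- which they do, that weight being bounded below by $\sigma_a^2>0$ while the quadratic confinement is untouched. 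Granting these, the sign-change step (i) gives the $N$ roots immediately and (ii)--(iii) rule out any others; I expect step (iii), the uniform control far from the zeros of $V'$, to be the fussiest point.
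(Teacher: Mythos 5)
The paper itself contains no proof of this theorem --- it is imported from \cite{alecio} (Theorem 2.12) with only the remark that Section 2 of that reference ``retools'' directly --- and your reconstruction follows precisely the route that remark points to: the zero-flux identity $\theta F[\mu]=-\mathbb{E}_{\rho_0[\mu]}[V'(X)]$ (valid at $\nu=1$), Laplace concentration of $\rho_0[\mu]$ at $\mu$ as $\theta\to\infty$, and a sign-change/derivative count at the simple roots of $V'$; this is correct and is essentially the intended argument. The one justification that needs repair is in your step (iii): for $|\mu|$ large at \emph{fixed} $\theta$ the mode of $\rho_0[\mu]$ is not within $O(1/\theta)$ of $\mu$ --- the $\theta$-independent factor $e^{-x^2/\sigma_m^2}$ inherited from $V'$ pins it near $(\theta\mu)^{1/3}$ --- but this only strengthens the conclusion, since then $m_1[\mu]-\mu<0$ for $\mu$ large positive (and $>0$ for $\mu$ large negative), so $F$ still has no zeros outside a fixed compact.
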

	In fact, for any convex potential in the presence of additive noise (or for a more general diffusion with the It\^o integral) there is a unique stationary measure and consequently one phase, see also \cite{malrieu}.

	With multiplicative noise and non-It\^o integral ($\nu\neq1$), $V^{'}$ is augmented by the integral correction term $(1-\nu)\sigma_m^2x$, with extensive ramifications. As before, for sufficiently small $\theta$ and large $\sigma_a$, it can be shown the stationary measure is unique. However, applying Theorem \ref{magnatheorem} to $V^{'}=x^3-ax-(1-\nu)\sigma_m^2x$ at $a=0$, any level of multiplicative noise allows for stable phase with a phase change (unstable to stable) as $\theta$ is increased. Similarly for $a<0$ the same result holds so long as $(1-\nu)\sigma_m^2>-a$.

	It is tempting, then, to conclude that multiplicative noise is always a stabilising influence, given that it deepens the potential wells. In fact, multiplicative noise also increases the weight of the tails of the stationary measure(s), which is destabilising. It is the competition between these two elements that will be the subject of the sequel, by study of $F^{'}_\mu[0]$. As Proposition \ref{stabres} establishes their effect, unless otherwise stated, $a$ and $\theta$ are set to unity in the following. $\sigma_c$ denotes the critical temperature of the Dawson-Shiino model, MV-SDE (\ref{vanpr}) with the simple bistable potential.\newline

	\begin{center}
		
		\begin{tabular}{   |p{2.2cm}||p{2.2cm}|p{2.2cm}|p{2.2cm}|p{2.2cm}| }
			\hline
			\multicolumn{5}{|c|}{Phase Portrait Summary} \\
			\hline
			
			$\nu$ & $\nu>\nu_1$ &$\nu_1>\nu>\nu_2$ &$\nu_2>\nu>\nu_3$& $\nu_3>\nu>0$\\
			\hline
			$\sigma_a $  & $3\rightarrow1$    &$ 3\rightarrow$&$ 3\rightarrow$&$3\rightarrow$\\
			increasing&   $1\rightarrow$  & $3\rightarrow1$&$1\rightarrow3$&$1\rightarrow3$\\
			$\downarrow$& &$1\rightarrow3\rightarrow1$&$1\rightarrow3\rightarrow1$ &$1\rightarrow$\\
			& & $1\rightarrow$&$1\rightarrow$&\\
			\hline
			&\multicolumn{4}{|c|}{$\sigma_m$ increasing $\rightarrow$} \\
			\hline
		\end{tabular}
	\end{center}

	\begin{center}
		\qquad\quad Table 1: Phase Transition Summary. Correspondence to figure \ref{fig::cont} described below the figure\newline
	\end{center}
	

	
	In Figure \ref{fig::cont}, a panel of contour graphs of $F_\mu^{'}[0]$, is presented, with the phase transition contour $F^{'}_\mu=0$, for a representative range of $\nu$. The phase changes for increasing $\sigma_m$, are presented in Table 1.
	
		
	%
	
	From Proposition \ref{starshaped}, with It\^o noise the set $\{(\sigma_a,\sigma_m):F^{'}_\mu[0]>0\}$ is star shaped about the origin. Broadly, as $\nu$ decreases, phase transition contour, parameterised by $\sigma_m$, moves away from the origin. The critical change is that below  $\nu_1$, the shape begins to change, from decreasing to increasing before decreasing, as $\sigma_m$ increases. Below $\nu_3$, it is increasing. 

	\begin{proposition}[Asymptotic Properties of $ F^{'}_\mu(\nu,\sigma_a,\sigma_m){[}\mu{]}$: $\sigma_m\downarrow 0$ ]$ $\newline
		\label{pm2}
		Let $G$ be the self-consistency function and $m_i$ the $i^{\mathrm{th}}$ moment of the Dawson-Shiino model.
		\begin{enumerate}
			\item $\lim\limits_{\sigma_m\downarrow 0} F^{'}_\mu(\nu,\sigma_a,\sigma_m)[0]=G^{'}(\sigma_a)[0]$
			\item $\lim\limits_{\sigma_m\downarrow 0}\frac{\partial F^{'}_\mu}{\partial\sigma^2_m}(\nu,\sigma_a,\sigma_m)=\frac{m_8-m_{10}}{3\sigma_a^4}+\frac{m_6-m_4}{3\sigma_a^2}-(1-\theta)\frac{m_6-m_8}{\sigma_a^4} +m_2-\nu(\frac{m_4-m_6}{\sigma_a^2}+m_2)$
		\end{enumerate} 
	\end{proposition}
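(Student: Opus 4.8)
The plan is to read both limits off the explicit stationary density (\ref{statmom1}) by Taylor expanding in the small parameter $\sigma_m^2$, after first recasting $F'_\mu[0]$ as an expectation against that density. Since $\mu=m_1$ enters $\rho_0[\mu]$ only through the factor $\exp\!\big(\tfrac{2\theta\mu}{\sigma_a\sigma_m}\arctan\tfrac{\sigma_m x}{\sigma_a}\big)$, put $h_{\sigma_m}(x):=\tfrac{2\theta}{\sigma_a\sigma_m}\arctan\tfrac{\sigma_m x}{\sigma_a}$, the $\mu$-derivative of the unnormalised log-density (independent of $\mu$). Then $F[\mu]=\mathbb E_{\rho_0[\mu]}[X]-\mu$ gives $F'_\mu[\mu]=\operatorname{Cov}_{\rho_0[\mu]}(X,h_{\sigma_m}(X))-1$, and since $\rho_0[0]$ is even while $h_{\sigma_m}$ is odd this becomes, at $\mu=0$, $F'_\mu(\nu,\sigma_a,\sigma_m)[0]=\mathbb E_{\rho_0[0]}[X\,h_{\sigma_m}(X)]-1$. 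The same identity for the Dawson--Shiino model yields $G'(\sigma_a)[0]=\tfrac{2\theta}{\sigma_a^2}m_2-1$.

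Expanding $\log(1+\tfrac{\sigma_m^2}{\sigma_a^2}x^2)$ in powers of $\sigma_m^2$ against the coefficient $\tfrac{a-\theta-\nu\sigma_m^2+\sigma_a^2/\sigma_m^2}{\sigma_m^2}$ cancels the singular $-x^2/\sigma_m^2$ term of (\ref{statmom1}) exactly, leaving $\log\rho_0[0]=L_0(x)+\sigma_m^2L_1(x)+O(\sigma_m^4)$, with $L_0$ the (symmetric, additive-noise-$\sigma_a$) Dawson--Shiino log-density up to a constant and $L_1$ an explicit even polynomial, and likewise $h_{\sigma_m}(x)=\tfrac{2\theta}{\sigma_a^2}x-\sigma_m^2\tfrac{2\theta}{3\sigma_a^4}x^3+O(\sigma_m^4)$. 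Hence $\rho_0[0]=\rho_{\mathrm{DS}}[0]\big(1+\sigma_m^2(L_1-\mathbb E_{\rho_{\mathrm{DS}}[0]}L_1)\big)+O(\sigma_m^4)$. Part (1) is then the leading order of $\mathbb E_{\rho_0[0]}[X h_{\sigma_m}(X)]-1$. For part (2) I would substitute both expansions into that expectation, read the coefficient of $\sigma_m^2$ as $-\tfrac{2\theta}{3\sigma_a^4}m_4+\tfrac{2\theta}{\sigma_a^2}\operatorname{Cov}_{\rho_{\mathrm{DS}}[0]}(X^2,L_1)$, expand the covariance into the moments $m_i$, and reduce it to the stated form by repeated use of the Dawson--Shiino moment recursion obtained from $\int\partial_x(x^{k-1}e^{L_0})\,dx=0$.

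The genuinely non-routine step is justifying that $\lim_{\sigma_m\downarrow0}$, and then $\partial/\partial\sigma_m^2$, commute with the integral. Differentiating (\ref{statmom1}) at $\mu=0$ gives $\rho_0'/\rho_0=\tfrac{2x(a-\theta-\nu\sigma_m^2-x^2)}{\sigma_a^2+\sigma_m^2x^2}$, which is $\le0$ for $|x|\ge\sqrt{a-\theta}$ uniformly in $\sigma_m$ near $0$; integrating this yields $\rho_0[0](x)\le C\,e^{-x^4/(8\sigma_a^2)}$ for $|x|$ large, uniformly in small $\sigma_m$, with a matching lower bound on the normalisation. This provides a $\sigma_m$-independent dominating function (and, times polynomials, the analogous bound for $\partial_{\sigma_m^2}\rho_0[0]$ and for $h_{\sigma_m}$ and its $\sigma_m$-derivatives), so dominated convergence applies once one notes that on any fixed compact set the integrand and its derivatives converge uniformly to their Taylor truncations. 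Everything after that — the termwise integration and the moment algebra collapsing the $\sigma_m^2$-coefficient to the asserted expression — is mechanical.
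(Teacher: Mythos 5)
Your proposal follows essentially the same route as the paper's proof: expand the stationary measure (\ref{statmom1}) in powers of $\sigma_m^2$, recover the Dawson--Shiino density at leading order, and read off the $\mathcal{O}(1)$ and $\mathcal{O}(\sigma_m^2)$ coefficients of $F'_\mu[0]$ as combinations of the moments $m_{2k}$. The paper works with the integrated-by-parts form (\ref{scm})/(\ref{intexp}), truncates the integral to the radius of convergence of the $\arctan$ and $\log$ series and bounds the tail, whereas you work with the covariance identity $F'_\mu[0]=\mathbb{E}_{\rho_0[0]}[Xh_{\sigma_m}(X)]-1$ and justify the interchange by uniform domination; your formulation of the interchange is cleaner than the paper's, which is the weakest part of its own argument.

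Two concrete points. First, your claimed uniform dominating function $\rho_0[0](x)\le Ce^{-x^4/(8\sigma_a^2)}$ is false in the far tail: for $|x|\gtrsim\sigma_a/\sigma_m$ the log-derivative $\tfrac{2x(a-\theta-\nu\sigma_m^2-x^2)}{\sigma_a^2+\sigma_m^2x^2}$ behaves like $-2x/\sigma_m^2$, so $\rho_0[0]$ decays there like $x^{2\sigma_a^2/\sigma_m^4}e^{-x^2/\sigma_m^2}$, which for $|x|>2\sqrt{2}\,\sigma_a/\sigma_m$ exceeds $e^{-x^4/(8\sigma_a^2)}$. The argument is easily repaired --- for $\sigma_m\le 1$ and $|x|\ge\max(\sigma_a,\sqrt{2|a-\theta|+1})$ one has $(\log\rho_0)'\le -x/2$, giving a $\sigma_m$-independent Gaussian dominant which suffices for dominated convergence --- but as written the bound is wrong precisely in the step you single out as non-routine. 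Second, be careful with normalisation: (\ref{sfcn}) is written against the unnormalised density (\ref{statmom1}), so taken literally $F'_\mu[0]=Z(\sigma_m)\bigl(\mathbb{E}[Xh_{\sigma_m}]-1\bigr)$, and $\partial_{\sigma_m^2}$ then produces an extra term $Z'_{\sigma_m^2}\cdot\bigl(\tfrac{2\theta}{\sigma_a^2}m_2-1\bigr)$ that vanishes only at $\sigma_a=\sigma_c$. Your normalised version and the paper's expression (\ref{sgnch}) therefore need not agree for general $\sigma_a$; since part (2) is stated for all $\sigma_a$, you must check which convention reproduces the stated moment formula before asserting that the final reduction (your $-\tfrac{2\theta}{3\sigma_a^4}m_4+\tfrac{2\theta}{\sigma_a^2}\operatorname{Cov}(X^2,L_1)$, which contains products $m_2m_6$, $m_2m_4$ and $\sigma_a^{-6}$ denominators, versus the target's linear $m_8,m_{10}$ terms) "collapses" via the moment hierarchy --- a step you assert but do not carry out.
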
 
	\begin{proof}
		Appendix \ref{sec::proofss}
	\end{proof}

	
	By the first point of the above proposition, the phase transition contour must emanate from $(\sigma_a,0)$. Further, knowing that $\frac{\partial G^{'}}{\partial \sigma_a}(\sigma_c)<0$ by Proposition 3.5 of \cite{alecio}, the sign of the gradient of the phase transition contour is equal to that of $\lim\limits_{\sigma_m\downarrow 0}\frac{\partial F^{'}_\mu}{\partial\sigma^2_m}(\nu,\sigma_c,\sigma_m)$ by the chain rule. This was determined as a function of the first 5 even moments of the stationary distribution of the Dawson-Shiino model in the second point of Proposition \ref{pm2}. In Appendix \ref{sec::proofss}, this was simplified to $\frac{1}{2}(\frac{1}{2}-\nu)(1-m_2)$, a decreasing function in $\nu$ that becomes positive at $\nu=0.5$. See Figure \ref{fig::cor2} for this sign change's dependence on $\theta$.

	\begin{figure}[h]
		\centering
		\includegraphics[width=\textwidth]{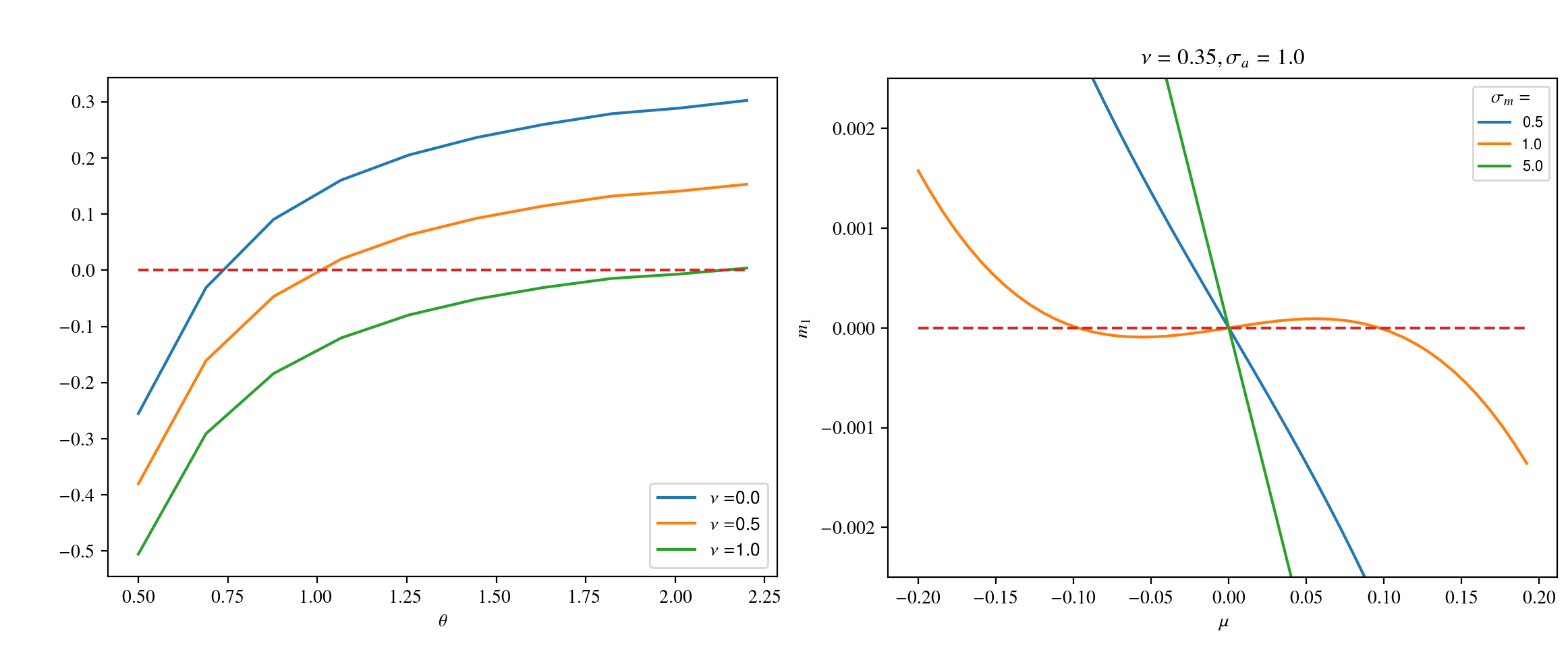}
		\caption{\textit{Left} Gradient of the phase transition contour at $(\sigma_c,0)$ against $\theta$ for It\^o, Stratonovich and Klimontovich noise. The roots at $\theta=1$ for $\nu=0.5$ has been recovered and those for $\nu=0$ and $\nu=1$ displayed. For $\theta$ above $\sim0.72$ for some range of $\nu$, noise induced stabilisation can be observed. It always occurs (regardless of $\nu$) for $\theta\gtrsim2.1$. \textit{Right} the self-consistency function for MV-SDE (\ref{funda}) displaying a $1\rightarrow3\rightarrow1$ phase change.
		\label{fig::cor2}}
	\end{figure}

	In the case that at the critical temperature, there is sufficient scale separation that the coefficient process $a + \sigma_m dW_t^2$ can be averaged out, the phase transition contour would be perpendicular to the $\sigma_a$ axis, suggesting the choice of Stratonovich noise, $\nu=0.5$ for very small $\sigma_m$.
	

	Consequently, for $\nu>0.5$ and $\sigma_a$ sufficiently close to $\sigma_c$, the system will transition from unstable to unstable, $(1\rightarrow3)$, and noise induced stabilisation occurs. Whether it returns to instability depends on the properties of $F^{'}_\mu[0]$ as $\sigma_m$ is increased. It can be expected that the limit $\lim\limits_{\sigma_m\uparrow\infty}F^{'}_\mu[0]$ is dependent on $\nu$.
	Indeed, the multiplicand $(1+x^2)^{-\nu}$ in $\rho_0$ dominates in the limit, by decreasing the relative weight of the tails with $\nu$.

	\begin{proposition}[Further Asymptotic Properties of $ F^{'}_\mu(\nu,\sigma_a,\sigma_m){[}\mu{]}$: $\sigma_m\uparrow\infty$ ]$ $
		\label{pm}

			\item If $\nu>0.5$, $\lim\limits_{\sigma_m\rightarrow \infty}F^{'}_\mu(\nu,\sigma_a,\sigma_m)<0$.\\
			Else if $\nu\leq0.5$, $\exists\, \sigma_c^\nu$ s.t for $\sigma_a>\sigma_c^\nu,$  $\lim\limits_{\sigma_m\rightarrow \infty}F^{'}_\mu(\nu,\sigma_a,\sigma_m)<0$ \\and for $\sigma_a<\sigma_c^\nu,$  $\lim\limits_{\sigma_m\uparrow \infty}F^{'}_\mu(\nu,\sigma_a,\sigma_m)>0$
		
	\end{proposition}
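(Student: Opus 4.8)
The plan is to first exploit that $\rho_0[0]$ is an even function of $x$ — the $\arctan$ term in (\ref{statmom1}) carries the factor $\mu$ — so that $F(\nu,\sigma_a,\sigma_m)[0]=0$ for every parameter value and only the sign of $F'_\mu[0]$ is at stake. Differentiating (\ref{sfcn}) with $\partial_\mu\rho_0=\rho_0\cdot\frac{2\theta}{\sigma_a\sigma_m}\arctan\frac{\sigma_m x}{\sigma_a}$ and using $\int x\,\rho_0[0]\,dx=0$, one reduces to the scalar identity
$$F'_\mu(\nu,\sigma_a,\sigma_m)[0]=-1+\frac{2\theta}{\sigma_a\sigma_m}\,\mathbb{E}_{\rho_0[0]}\!\left[X\arctan\tfrac{\sigma_m X}{\sigma_a}\right],\qquad \rho_0[0](x)\propto\Bigl(1+\tfrac{\sigma_m^2}{\sigma_a^2}x^2\Bigr)^{\beta_m}e^{-x^2/\sigma_m^2},$$
where $\beta_m:=\bigl(a-\theta-\nu\sigma_m^2+\sigma_a^2/\sigma_m^2\bigr)/\sigma_m^2\to-\nu$ (we take $a=\theta=1$ per the standing convention; $a$ affects only a lower-order term of $\beta_m$). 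Since $X$ and $\arctan\frac{\sigma_m X}{\sigma_a}$ have the same sign and $0\le\frac{\pi}{2}-\arctan t=\arctan\frac1t\le\min(\frac{\pi}{2},\frac1t)$ for $t>0$, one gets $\bigl|\mathbb{E}_{\rho_0[0]}[X\arctan\frac{\sigma_m X}{\sigma_a}]-\tfrac{\pi}{2}\mathbb{E}_{\rho_0[0]}|X|\bigr|\le\sigma_a/\sigma_m$, whence $F'_\mu[0]=-1+\frac{\pi\theta}{\sigma_a}\cdot\frac{\mathbb{E}_{\rho_0[0]}|X|}{\sigma_m}+O(\sigma_m^{-2})$. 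The whole statement then follows from the limit of $\mathbb{E}_{\rho_0[0]}|X|/\sigma_m$, and I would split into two regimes according to whether the limiting profile $(1+\sigma_m^2x^2/\sigma_a^2)^{-\nu}$ is integrable, i.e.\ $\nu\gtrless\frac12$.

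In the regime $\nu>\frac12$ one has $\beta_m<-\frac12$ eventually, and $\rho_0[0]$ concentrates on the scale $|X|\sim\sigma_a/\sigma_m$: substituting $x=(\sigma_a/\sigma_m)u$, the normalising integral converges to $\int_{\mathbb{R}}(1+u^2)^{-\nu}\,du>0$ by dominated convergence (dominate $(1+u^2)^{\beta_m}$ by $(1+u^2)^{\beta^*}$ for any fixed $\beta^*\in(-\nu,-\tfrac12)$), while $|x\arctan\frac{\sigma_m x}{\sigma_a}|\le\frac{\pi}{2}(1+x^2)^{1/2}$ combined with a Laplace-type estimate of $\int(1+\sigma_m^2x^2/\sigma_a^2)^{\beta_m+1/2}e^{-x^2/\sigma_m^2}\,dx$ gives $\mathbb{E}_{\rho_0[0]}|X|=o(\sigma_m)$. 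Hence $F'_\mu[0]\to-1<0$, which is the first clause.

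In the regime $\nu\le\frac12$ the profile $(1+\sigma_m^2x^2/\sigma_a^2)^{-\nu}$ is no longer integrable and it is the Gaussian factor that confines $\rho_0[0]$, whose mass now sits on the scale $|X|\sim\sigma_m$. Rescaling $x=\sigma_m w$, for $w\neq0$ one has $(1+\sigma_m^4w^2/\sigma_a^2)^{\beta_m}\sim\sigma_m^{4\beta_m}\sigma_a^{-2\beta_m}|w|^{2\beta_m}$ and $\arctan(\sigma_m^2w/\sigma_a)\to\frac{\pi}{2}\operatorname{sgn}(w)$; passing to the limit — the one delicate point being the near-origin region $|w|\lesssim\sigma_a\sigma_m^{-2}$, whose mass fraction is $O(\sigma_m^{4\nu-2})\to0$ precisely because $\nu<\frac12$ — yields
$$\frac{\mathbb{E}_{\rho_0[0]}|X|}{\sigma_m}=\frac{\int_{\mathbb{R}}|w|^{2\beta_m+1}e^{-w^2}\,dw}{\int_{\mathbb{R}}|w|^{2\beta_m}e^{-w^2}\,dw}\,(1+o(1))=\frac{\Gamma(\beta_m+1)}{\Gamma(\beta_m+\tfrac12)}(1+o(1))\longrightarrow\frac{\Gamma(1-\nu)}{\Gamma(\tfrac12-\nu)},$$
both Gamma integrals being finite exactly because $\beta_m\to-\nu\in(-\tfrac12,0)$. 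Combining with the reduction above,
$$F'_\mu(\nu,\sigma_a,\sigma_m)[0]\longrightarrow-1+\frac{\pi\theta}{\sigma_a}\,\frac{\Gamma(1-\nu)}{\Gamma(\tfrac12-\nu)}=-1+\frac{\sigma_c^\nu}{\sigma_a},\qquad \sigma_c^\nu:=\pi\theta\,\frac{\Gamma(1-\nu)}{\Gamma(\tfrac12-\nu)}>0,$$
which is positive for $\sigma_a<\sigma_c^\nu$ and negative for $\sigma_a>\sigma_c^\nu$. The borderline $\nu=\frac12$ fits the same picture with $\sigma_c^{1/2}=0$: there $\beta_m+\tfrac12=O(\sigma_m^{-2})\downarrow0$, so $\Gamma(\beta_m+\tfrac12)\to\infty$ and $F'_\mu[0]\to-1$.

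The step I expect to be the main obstacle is the regime $\nu\le\frac12$, where $F[\mu]$ itself diverges as $\sigma_m\uparrow\infty$: one must rescale before taking the limit and then rigorously justify the exchange of limit and integral — in particular controlling the near-origin part of the rescaled integrand, where it blows up pointwise, and checking that the $O(\sigma_m^{-2})$ corrections buried in $\beta_m$, which contribute $\log\sigma_m$ factors to the prefactors but not to the Gamma-ratio limit, are immaterial to the sign.
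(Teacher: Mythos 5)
Your reduction $F'_\mu[0]=-1+\tfrac{2\theta}{\sigma_a\sigma_m}\mathbb{E}_{\rho_0[0]}\bigl[X\arctan\tfrac{\sigma_m X}{\sigma_a}\bigr]$ is sound (the normalisation term drops by parity at $\mu=0$), and your two-scale analysis lands on exactly the paper's critical value: the limit $-1+\pi\theta\,\Gamma(1-\nu)/(\sigma_a\Gamma(\tfrac12-\nu))$ reproduces (\ref{sgnff})/(\ref{gchange1}) at $\theta=1$. The route, however, is genuinely different from the paper's. The paper works with the unnormalised integral representation (\ref{scm}), rescales to (\ref{ffsc}), replaces $\arctan$, $(1+y^2)^{-\nu}$ and the log-exponential factor by their large-$y$ asymptotics on $|y|>1$, and evaluates term by term via the incomplete Gamma expansion (\ref{incmoms}), reading the sign off the leading nonvanishing coefficient in (\ref{gchange0}). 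You instead normalise first, so the dichotomy at $\nu=\tfrac12$ becomes the integrability of the limiting profile $(1+u^2)^{-\nu}$: for $\nu>\tfrac12$ the mass lives at scale $\sigma_a/\sigma_m$ and the expectation term dies, while for $\nu<\tfrac12$ the Gaussian confines the mass at scale $\sigma_m$ and $\mathbb{E}|X|/\sigma_m\to\Gamma(1-\nu)/\Gamma(\tfrac12-\nu)$. Your version makes the mechanism (competition between the constant $-1$ and the typical magnitude of $X$) and the origin of the $\nu=\tfrac12$ threshold more transparent, and it cleanly isolates the one genuinely delicate limit-exchange (the near-origin region of measure fraction $O(\sigma_m^{4\nu-2})$); the paper's version is more computational but produces the full asymptotic expansion in powers of $\sigma_m$, not just the sign of the limit.

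One step needs repair. In the $\nu>\tfrac12$ branch you bound $|x\arctan(\sigma_m x/\sigma_a)|\le\tfrac{\pi}{2}(1+x^2)^{1/2}$ and then dominate $(1+x^2)^{1/2}(1+\sigma_m^2x^2/\sigma_a^2)^{\beta_m}$ by $(1+\sigma_m^2x^2/\sigma_a^2)^{\beta_m+1/2}$. That last replacement inflates the weight by a factor of order $\sigma_m/\sigma_a$ away from the origin, and the resulting Laplace estimate gives $\mathbb{E}[(1+\sigma_m^2X^2/\sigma_a^2)^{1/2}]\asymp\max(1,\sigma_m^{4-4\nu})$, which after multiplying by $\tfrac{2\theta}{\sigma_a\sigma_m}$ only vanishes for $\nu>\tfrac34$. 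The fix stays entirely within your framework: bound $|x\arctan(\sigma_m x/\sigma_a)|\le\tfrac{\pi}{2}|x|$ directly and run the same rescaled Laplace computation on $\mathbb{E}|X|$, which yields $\mathbb{E}|X|\asymp\max(\sigma_m^{-1},\sigma_m^{3-4\nu})=o(\sigma_m)$ for every $\nu>\tfrac12$, so $F'_\mu[0]\to-1$ as claimed.
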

	
	\begin{proof}
		Appendix \ref{sec::proofss}
	\end{proof}

	In Appendix \ref{sec::proofss}, the sign of the limit was numerically determined in (\ref{gchange0}). The value of $\sigma_a$ at which it changes sign is
	\begin{equation}\label{sgnff}\sigma_a=\frac{\pi\Gamma(1 - \nu)}{\Gamma(1/2 - \nu)}\end{equation}
	which is a decreasing function in $\nu$, with a root at $\nu=0.5$. With this, and the previous determination that the phase transition contour is tangential to the $\sigma_a$ axis at the same value of $\nu$, $\nu_3=0.5$.
	
	Solving (\ref{sgnff}) at $\sigma_a=\sigma_c$ yields $\nu\approxeq0.28$. For $\nu$ below this, the phase transition $3\rightarrow 1$ cannot exist for $\sigma_a<\sigma_c$. Therefore $\nu_2\approxeq0.28$
	

	By similar reasoning, for $0.28<\nu<0.5$ and $\sigma_a$ greater than but sufficiently close to $\sigma_c$, the line of constant $\sigma_a$ must intersect the contour at least twice, corresponding to the phase change $1\rightarrow3\rightarrow1$: unstable to stable, returning to unstable again (see Figure \ref{fig::bif}, top left graph). $\nu_1$ is in fact less than 0.28 as the asymptote of the phase transition contour is still smaller than its peak. This was determined numerically to be $\nu_1\approxeq0.11$. Below this point the contour is seen to be strictly increasing, limiting possible phase changes further.
	
    \section{Conclusions}
	In this work an MV-SDE with bistable drift, with additive and, novelly, multiplicative noise has been studied.
	After a brief review of systemic risk, following \cite{ss}, a MV-SDE (Dawson-Shiino) model derived from a interacting diffusion model of systemic risk of interconnected components is presented. A range of scenarios where uncertainty in the robustness of the components may occur is discussed, and a novel MV-SDE model is derived.
	
	For this model, the results have demonstrated the existence phase changes that cannot occur in the Dawson-Shiino model, that stem directly from varying noise interpretations and uncertainty in the robustness of components. Of particular interest, for a range of $\theta$ a noise induced stability phenomenon was observed. Namely, if the additive noise $\sigma_a$ is set greater than, but sufficiently close to the critical temperature $\sigma_c$ of the limiting Dawson-Shiino model and an appropriate noise interpretation chosen, increasing multiplicative noise $\sigma_m$ will push the system into the stable phase. It will remain there, or re-enter the unstable phase depending again on $\nu$. 

	A potential future are of inquiry would be whether similar noise induced stability can be seen in MV-SDE (\ref{funda}) with a multi-well potential, see section 4 of \cite{alecio}

	\section{Acknowelgement}
	The initial idea to study MV-SDE (\ref{funda}) numerically was Prof. G.A Pavliotis'.

	\begin{figure}[H]
		\centering
		\includegraphics[width=\textwidth]{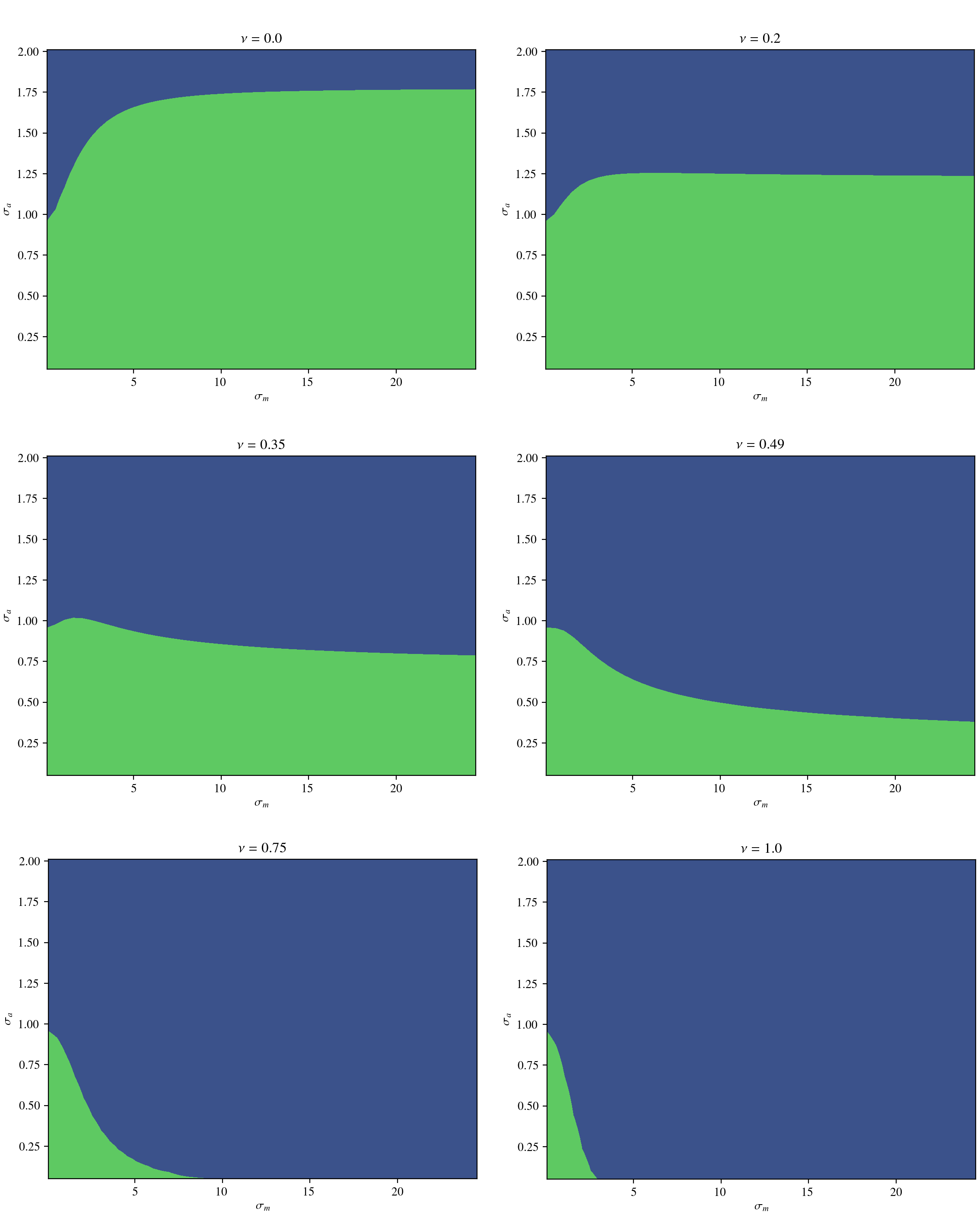}
		
		\caption{Panel of contour diagrams of $F^{'}(\nu,\cdot,\cdot){[}0{]}$ for increasing $\nu$, green positive, blue negative. The phase transition contour intersects the $\sigma_a$ axis at $\sigma_c$. Graph $\nu=\{0.75,1\}$ corresponds to column 4, $\nu=0.49$ and $\nu=0.35$ column 3, $\nu=0.2$ column 2 and $\nu=0$ column 1 of Table 1}
		\label{fig::cont}
	\end{figure}

	\appendix
	\section{Formal Identification of the Limit}
	\par Given the exchangeability and weak interaction between particles (inversely proportional to the number of particles), it seems reasonable to impose as an ansatz that the particles are identically and independently distributed, $\rho_n\approx\prod_{l=1}^n\rho(x_l,t)=\rho^{\otimes n}$ as for $n$  sufficiently large.
	
	The associated Fokker-Planck equation for the $n$-particle system is
	\begin{equation}
		\label{fpfd}
		\frac{\partial\rho_N}{\partial t}=\sum_{i=1}^{N}\frac{\partial}{\partial x_i}\big( V'(x_i)+\theta(x_i-\frac{1}{N}\sum_{j=1}^{N}x_j)\big)\rho_N+\sum_{i=1}^{N}\frac{\partial^2}{\partial x_i^2}\big(\frac{\sigma_a^2+\sigma_m^2 x_i^2}{2}\big)\rho_N
	\end{equation}
	To find a closed expression for $\rho(x_i)=\int_{\backslash i}\rho^N$, we integrate (\ref{fpfd}) over all indices but the $\mathrm{i}^{th}$ - denoted as $\backslash i$.
	
	Consider first the the terms deriving from the drift:
	\begin{equation}
		\begin{split}\int_{\backslash i}\sum_{i=1}^{N}\frac{\partial}{\partial x_i}\big( V'(x_i)+\theta(x_i-\frac{1}{N}\sum_{j=1}^{N}x_j)\big)\rho_N=\frac{\partial}{\partial x_i}\int_{\backslash i}\big( V'(x_i)+\theta(x_i-\frac{1}{N}\sum_{j=1}^{N}x_j)\big)\rho_N\\+\sum_{\backslash i}\frac{\partial}{\partial x_j}\big( V'(x_j)+\theta(x_j-\frac{1}{N}\sum_{k=1}^{N}x_k)\big)\rho_{ij}(x_i,x_j)|_{x_j=-\infty}^{\qquad \infty}\end{split}\end{equation}
	where we assume the both $\rho_N$ and its first derivative with respect to all its variables decays to 0 sufficiently fast to annihilate all the terms in the second line 
	and sufficient smoothness of $\rho^N$ to commute the integral and derivate. We can simplify the remaining term as follows. 
	$$\frac{\partial}{\partial x_i}\Big(V'(x_i)\rho_i+\theta (1-\frac{1}{N})\big(x_i+\int x_i\rho_i\big)\rho_i\Big)$$where we have used that $\int x_i\rho_i=\int x_j\rho_j$ for any $i,j$. Upon taking the limit $N\rightarrow\infty$ we get:	
	\begin{equation}\label{pt1}\frac{\partial}{\partial x_i}\Big(V'(x_i)+\theta \Big(x_i+\int x_i\rho_i)\Big)\rho_i\end{equation}
	As for the second term, we have
	\begin{equation}
		\begin{split}
			\frac{1}{2}\int_{\backslash x_i}\sum_j\frac{\partial^2}{\partial x_j^2}\sigma^2(x_j)\rho^N=\\\frac{\partial^2}{\partial x_i^2}\frac{\sigma^2(x_i)}{2}\rho_i+\sum_{\backslash i}\frac{\partial}{\partial x_j}\frac{\sigma^2(x_j)}{2}\rho_{ij}(x_i,x_j)|_{x_j=-\infty}^{\qquad \infty}
		\end{split}
	\end{equation}
	The assumptions above are strong enough to ensure all the terms in the last sum are null. Adding the remaining term to (\ref{pt1}), renaming $\rho_i$ as $\rho$ we get (\ref{fpe}) as desired. An almost identical calculation can be done in the presence of non-It\^o noise, for a suitable correction in the drift.
	\cite{infe}\newline
	\begin{figure}[t]
		\centering
		\includegraphics[width=.9\textwidth]{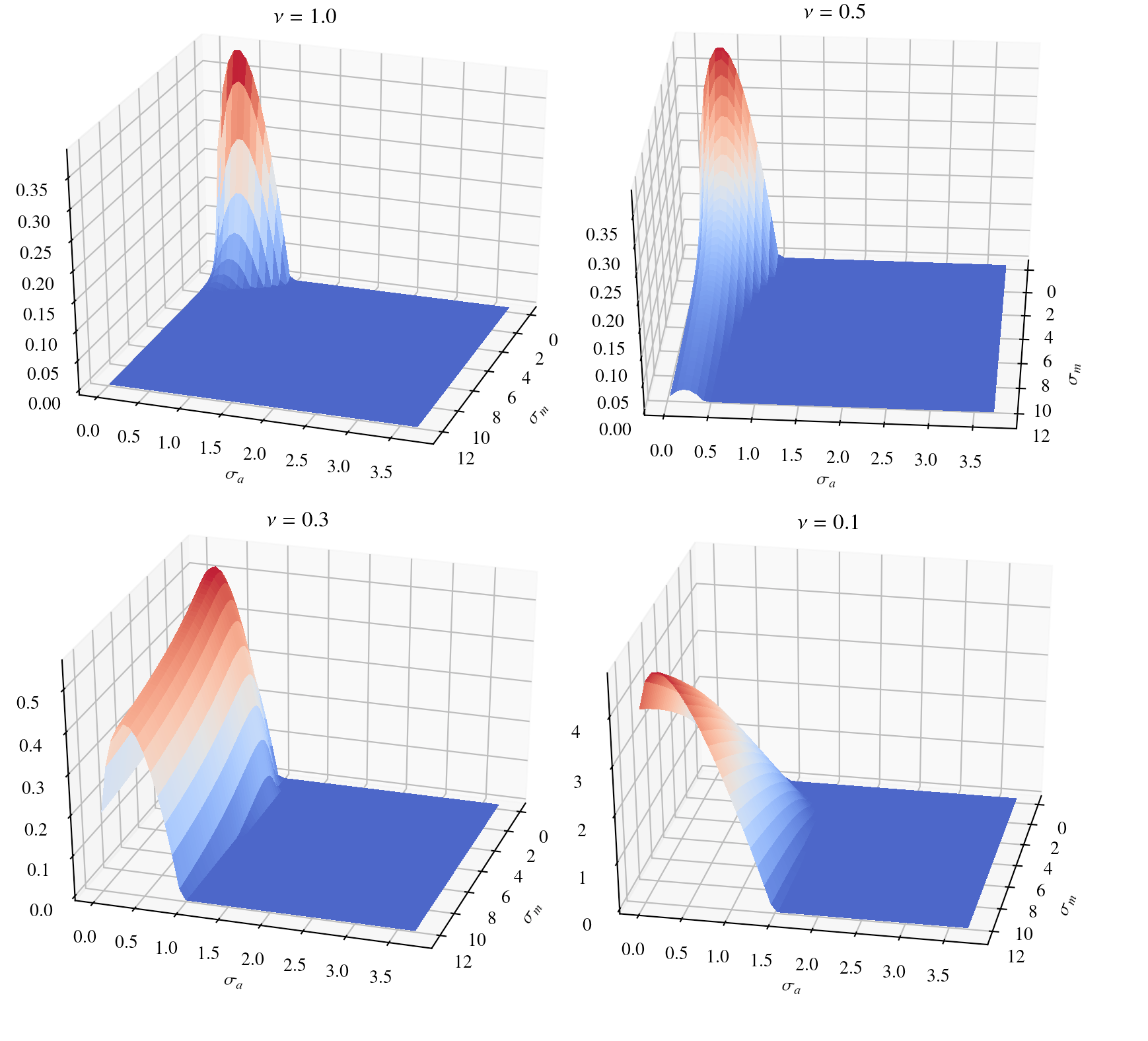}
		\caption{Panel of Bifurcation diagrams for $\nu$ as inscribed}
		\label{fig::3dbif}
	\end{figure}
	
	\section{Proofs}
	\label{sec::proofss}
	\subsection{Proposition \ref{stabres}}
	
	It can be shown at a root of $F^{'}_\mu[0]$, the derivative with respect to $\theta$ or $a$ must be positive, similarly to Proposition 3.5 \cite{alecio}. Then, like Proposition 3.8 of \cite{alecio}, the interval(s) on which $F^{'}_\mu(\sigma_a,k\sigma_m)[0]>0$ must be increasing.

	\subsection{Proposition \ref{pm2}}
	
	The idea here is to expand the self-consistency equation, paying close attention to their radius of convergence, whilst recovering the Shiino-Dawson symmetric stationary measure. The radius of convergence of both $\arctan \sigma_mx$ and $\log(1+\sigma_mx)$ are finite, limiting the domain of the resulting integral:
	\begin{equation}\label{intexp}
	2\int_0^{\frac{1}{\sigma_m^2}}\exp(-\frac{x^4}{2\sigma_a^2})((x-x^3)+\sigma_m^2(1-\nu)x)(x-\frac{\sigma_m^2x^3}{3\sigma_a^2}+\dots)(1+\frac{\sigma_m^2}{\sigma_a^2}(\frac{x^6}{3}-\nu x^2)+\dots)dx
	\end{equation}
	On $(\frac{1}{\sigma_m^2},\infty)$ the above integral is dominated by $k\exp(-\sigma_m^2)$
	Consequently, integral (\ref{intexp}) can be extended to $\infty$, yielding the following expression in the moments of $\rho_0$, to order $\sigma_m^2$
	
	\begin{equation}
		\label{sgnch}
		(m_2-m_4)+\sigma_m^2\Big(\frac{m_8-m_{10}}{3\sigma_a^4}+\frac{m_6-m_4}{3\sigma_a^2}-(1-\theta)\frac{m_6-m_8}{\sigma_a^4} +m_2-\nu(\frac{m_4-m_6}{\sigma_a^2}+m_2)\Big)+\sigma_m^4(\dots)
	\end{equation}
	where a factor of $\sigma_a^2$ has been eliminated. Using the moment hierarchy \cite{dawson} of the symmetric stationary measure, (\ref{sgnch}) can be written entirely in terms of $\sigma_a$ and $m_2$, see (\ref{sympy}) and Appendix \ref{sec:mhel} for its derivation.
 	
	The $\mathcal{O}(1)$ term is just the self-consistency equation of the Dawson-Shiino model, and so is 0 at $\sigma_a=\sigma_c$. For $\theta=1$, at the critical temperature (\ref{sgnch}) is $$\frac{1}{2}(\frac{1}{2}-\nu)(1-m_2)$$
	where $m_2=m_2(\sigma_c^{\theta=1})\approxeq0.457$. This is a decreasing function in $\nu$ with a root $\nu=\frac{1}{2}$. 

	\subsection{Proposition \ref{pm}}
	Using the approach of \cite{alecio}, we rewrite $F^{'}_\mu$ as 
	\begin{equation}\label{scm}
		\frac{4}{\sigma_m\sigma_a}\int_\mathbb{R^+} \arctan(\frac{\sigma_m}{\sigma_a}x)\big(x(1+\sigma_m^2(1-\nu))-x^3\big)\rho_{st}
	\end{equation}
	This form is useful for $\sigma_m<<1$. For $\sigma_m>>1$ the substitution $y=\frac{\sigma_m x}{\sigma_a}$ yields a far more lucid expression,
	
	\begin{equation}\label{ffsc}
		\begin{split}
			\frac{4\sigma_a}{\sigma_m^3}\int_\mathbb{R^+} \arctan(y)\big(y(1+\sigma_m^2(1-\nu))-y^3\frac{\sigma_a^2}{\sigma_m^2}\big)(1+y^2)^{-\nu}\\\exp(\frac{\sigma_a^2}{\sigma_m^4}(\log(1+y^2)-y^2))dx
		\end{split}
	\end{equation}
	If we can approximate $\arctan(x)$ and $ \log(x)$ with power series, we can evaluate the resulting expression with the following formula.
	With $y\ne-1$ and $x>0$
	\begin{equation}\label{incmoms}
		\begin{split}
			\int_x^\infty x^y \exp(-\frac{\sigma_a^2 x^2}{\sigma_m^4}) dx =\frac{1}{2} \sigma_a^{-y-1}\sigma_m^{2y+2} \Gamma(\frac{y + 1}{2}, \frac{\sigma_a^2 x^2}{\sigma_m^4})\approxeq\\
			\frac{1}{2}\sigma_a^{-y-1}\sigma_m^{2y+2}\Big(\Gamma(\frac{y+1}{2})-\frac{2(\sigma_ax)^{y+1}}{y+1}\sigma_m^{-2y-2}+\mathcal{O}(\sigma_m^{-6-2y})\Big)
		\end{split}
	\end{equation}
	where $\Gamma(x,y)$ is the incomplete Gamma function, \cite{absteg}.
	
	For $\nu<0.5$ we derive the the asymptotic expansion of $F^{'}_{\mu}$ in $\sigma_m$ as follows. The difference of the integral over the entire real line and $\{|x|>1\}$ becomes negligible as $\sigma_m$ tends to $\infty$. On this reduced domain, $(1+x^2)^{-\nu} \sim x^{-2\nu}$, $(1+x^2)^{-\frac{1}{\sigma_m^4}}\sim 1$ and $\arctan(x)\sim 1 -\frac{1}{x}-\frac{1}{3x^3}$. 
	
	Substituting into equation (\ref{ffsc}) and using equation (\ref{incmoms}) to evaluate the resulting expression, in expanding in powers of $\sigma_m$:
	\begin{equation}\label{gchange0}
		0.\sigma_m^{6-4\nu}+\sigma_a^{2\nu-2}[\frac{\pi}{2}\Gamma(1-\nu)+\sigma_a\big(\Gamma(\frac{3}{2}-\nu)-(1-\nu)\Gamma(\frac{1}{2}-\nu)\big)]\sigma_m^{4-4\nu}+\mathcal{O}(\sigma_m^{2-4\nu})
	\end{equation}
	This changes sign when 
	\begin{equation}\label{gchange1}
		\sigma_a=-\frac{\frac{\pi}{2}\Gamma(1-\nu)}{\Gamma(3/2-\nu)-(1-\nu)\Gamma(1/2-\nu)}=\frac{\pi\Gamma(1 - \nu)}{\Gamma(1/2 - \nu)}
	\end{equation}
	a strictly decreasing function in $\nu$, with range $[\sqrt{\pi},0]$. When $\nu\approxeq0.28$ this occurs at $\sigma_c$.

	\section{Moment Hierarchy of the Dawson-Shiino model}\label{sec:mhel}

	As noted in \cite{dawson}, the moments of the stationary measures of the Dawson-Shiino model can be found by solving the moment evolution equation. For the the symmetric stationary solution
    \[
	m_{2p}=(1-\theta)m_{2p-2}+\frac{1}{2}(2p-3)\sigma_a^2m_{2p-4}
	\]
	In terms of the $m_2$, the first 5 even moments are:
	\[
	m_4=m_{2} \left(1 - \theta\right) + \frac{\sigma_a^{2}}{2}\]
	\[m_6=m_{2} \left(\frac{3 \sigma_a^{2}}{2} + \theta^{2} - 2 \theta + 1\right) - \frac{\sigma_a^{2} \theta}{2} + \frac{\sigma_a^{2}}{2}\]
	\[m_8=m_{2} \left(- 4 \sigma_a^{2} \theta + 4 \sigma_a^{2} - \theta^{3} + 3 \theta^{2} - 3 \theta + 1\right) + \frac{5 \sigma_a^{4}}{4} + \frac{\sigma_a^{2} \theta^{2}}{2} - \sigma_a^{2} \theta + \frac{\sigma_a^{2}}{2}\]
	\[\begin{split}m_{10}=m_{2} \left(\frac{21 \sigma_a^{4}}{4} + \frac{15 \sigma_a^{2} \theta^{2}}{2} - 15 \sigma_a^{2} \theta + \frac{15 \sigma_a^{2}}{2} + \theta^{4} - 4 \theta^{3} + 6 \theta^{2} - 4 \theta + 1\right)\\ - 3 \sigma_a^{4} \theta + 3 \sigma_a^{4} - \frac{\sigma_a^{2} \theta^{3}}{2} + \frac{3 \sigma_a^{2} \theta^{2}}{2} - \frac{3 \sigma_a^{2} \theta}{2} + \frac{\sigma_a^{2}}{2}\end{split}\]

	Substituting into (\ref{sgnch}):
	\begin{equation}
		\label{sympy}
		\begin{split}
		m_{2} \left(\frac{\nu}{2} - \frac{1}{4} + \frac{\theta^{2} \nu}{\sigma_a^{2}} - \frac{\theta^{2}}{6 \sigma_a^{2}} - \frac{\theta \nu}{\sigma_a^{2}} + \frac{\theta}{12 \sigma_a^{2}} + \frac{1}{12 \sigma_a^{2}} + \frac{\theta^{4}}{6 \sigma_a^{4}} - \frac{\theta^{3}}{2 \sigma_a^{4}} + \frac{\theta^{2}}{2 \sigma_a^{4}}- \frac{\theta}{6 \sigma_a^{4}}\right)\\ - \frac{\theta \nu}{2} + \frac{5 \theta}{24} + \frac{1}{24} - \frac{\theta^{3}}{12 \sigma_a^{2}} + \frac{\theta^{2}}{6 \sigma_a^{2}} - \frac{\theta}{12 \sigma_a^{2}}
		\end{split}
	\end{equation}

	\bibliography{ggbiblio1}{}
    \bibliographystyle{abbrv}
	\end{document}